\documentclass[11pt,reqno]{amsart}
\usepackage{amsthm,amssymb,amsmath}
\usepackage{textpos}

\usepackage[T1]{fontenc}

\textwidth 166mm
\oddsidemargin -4mm
\evensidemargin -4mm
\textheight 230mm
\topmargin 0mm

\newtheorem{theorem}{Theorem}
\newtheorem{lemma}{Lemma}

\newtheorem{proposition}{Proposition}
\newtheorem{claim}{Claim}

\theoremstyle{definition}

\newtheorem{definition}{\sc Definition}

\newtheorem{example}{\bf Example}
\newtheorem{remark}{\sc Remark}
\newtheorem*{example*}{\bf Example}

\newcommand{\loc}{{\rm loc}}
\newcommand{\supp}{{\rm spt\,}}

\newcommand{\Id}{{\rm Id}}

\newcommand{\clos}{{\rm clos}}

\expandafter\def\expandafter\normalsize\expandafter{%
    \normalsize
    \setlength\abovedisplayshortskip{8pt}
    \setlength\belowdisplayshortskip{8pt}
}

\begin{document}

\title[Feller evolution families]{
Feller evolution families and parabolic equations with form-bounded vector fields
}
\author{Damir Kinzebulatov}

\address{Universit\'{e} Laval, D\'{e}partement de math\'{e}matiques et de statistique, pavillon Alexandre-Vachon, Qu\'{e}bec, PQ, G1V 0A6, Canada}

\address{Current address: Indiana University, Department of Mathematics, Rawles Hall, Bloomington, IN, 47405, United States}

\email{damkinze@indiana.edu}

\subjclass[2010]{35K10, 60G12}

\keywords{Parabolic equations, Kolmogorov backward equation, Feller processes, a priori estimates}

\begin{abstract}
We show that the weak solutions of parabolic equation 
$\partial_t u - \Delta u + b(t,x) \cdot \nabla u=0$, $(t,x) \in (0,\infty) \times \mathbb R^d$, $d \geqslant 3$,
for $b(t,x)$ in a wide class of time-dependent vector fields capturing critical order singularities, constitute a Feller evolution family and, thus, determine a Feller process. Our proof uses an a priori estimate on the $L^p$-norm of the gradient of solution in terms of the $L^q$-norm of the gradient of initial function, and an iterative procedure that moves the problem of convergence in $L^\infty$ to $L^p$.
\end{abstract}

\maketitle

\section{Introduction and results} 

\subsection{}
\label{introsect}

\label{mainsect}

Consider Cauchy problem 
\begin{equation}
\label{cauchy}
(\partial_t-\Delta + b(t,x)\cdot \nabla ) u=0, \qquad (t,x) \in (0,\infty) \times \mathbb R^d, 
\end{equation}
\begin{equation}
\label{weak_cauchy2}
u(+0,x)=f(x), 
\end{equation}
where $d \geqslant 3$, 
$b \in L^1_{\loc}([0,\infty) \times \mathbb R^d,\mathbb R^d)$, $f \in L^{2}_{\loc}(\mathbb R^d)$.

\smallskip

We prove that for $b$ in a wide class of time-dependent vector fields capturing critical order singularities the
unique weak solution of \eqref{cauchy}, \eqref{weak_cauchy2} for the
initial function $f$ in space
$C_\infty(\mathbb R^d):=\{f \in C(\mathbb R^d): \lim_{x \rightarrow \infty} f(x)=0\}$ (endowed with $\sup$-norm $\|\cdot\|_\infty$) is given
by a Feller  evolution family, i.e.~a family of bounded linear operators $(U(t,s))_{0 \leqslant s \leqslant t<\infty} \subset  \mathcal L\bigl(C_\infty(\mathbb R^d)\bigr)$ such that:

\begin{itemize}
\item[(\textbf{E1})] $U(s,s)=\Id$, $U(t,s)=U(t,r)U(r,s)$ for all $0 \leqslant s \leqslant r \leqslant t$,

\item[(\textbf{E2})] mapping $(t,s) \mapsto U(t,s)$ is strongly continuous in $C_\infty(\mathbb R^d)$,

\item[(\textbf{E3})] operators $U(t,s)$ are positivity-preserving and $L^\infty$-contractive:
\begin{equation*}
U(t,s)f \geqslant 0 \quad \text{ if } \quad f \geqslant 0, \quad \text{ and } \quad \|U(t,s)f\|_{\infty} \leqslant \|f\|_{\infty}, \quad 0 \leqslant s \leqslant t,
\end{equation*}
\end{itemize}

\begin{itemize}
\item[(\textbf{E4})] function $u(t):=U(t,s)f$ ($t>s$) is a weak solution of equation \eqref{cauchy}.

\end{itemize}

It is well known that the operators $(U(t,s))_{0 \leqslant s \leqslant t<\infty}$ determine the (sub-Markov) transition probability function of a Feller process $X_t$ (in particular, a Hunt process), see e.g.~\cite[Theorem 2.22]{GC}.
$X_t$ is related to the differential operator in \eqref{cauchy} via  (\textbf{E4}).
The problem of constructing a Brownian motion perturbed by a locally unbounded drift $b$
has been thoroughly studied in the literature, motivated by applications as well as by the search for the maximal general class of drifts $b$ such that the associated diffusion exists (see \cite{KR} and references therein). 

In the present paper, we consider the following class of drifts:

\begin{definition}
\label{fbd_def}
The parabolic class of form-bounded  vector fields $\mathbf{F}_{\beta,\,\mathcal P}=\mathbf{F}_{\beta,\,\mathcal P}(-\Delta)$
consists of vector fields $b \in L^2_{\loc}\bigl([0,\infty) \times \mathbb R^d,\mathbb R^d\bigr)$ such that
\begin{equation}
\label{bc}
\tag{$\mathbf{BC}$}
\int_0^\infty \|b(t,\cdot)\varphi(t,\cdot)\|_2^2 dt \leqslant \beta \int_0^\infty\|\nabla \varphi(t,\cdot)\|_2^2 dt+\int_0^\infty g(t)\|\varphi(t,\cdot)\|_2^2dt
\end{equation}
for some $\beta<\infty$ and $g=g_\beta \in L^1_{\loc}([0,\infty))$, $g \geqslant 0$,
for all $\varphi \in C_c^\infty([0,\infty)  \times \mathbb R^d)$.

$\|\cdot\|_2$ is the norm in $L^2(\mathbb R^d)$. 
\end{definition}

It is clear that $b \in \mathbf{F}_{\beta,\,\mathcal P} \,\Leftrightarrow\, cb \in \mathbf{F}_{c^2 \beta,\,\mathcal P}$, $c \neq 0$.

\begin{example}
\label{ex1}
1. If $b:\mathbb R^d \rightarrow \mathbb R^d$, $b=b_1+b_2$, $|b_1| \in L^{d,\infty}(\mathbb R^d)$ (weak $L^d$ space), $|b_2| \in L^\infty(\mathbb R^d)$, then $b \in \mathbf{F}_{\beta,\,\mathcal P}$ with 
\begin{align*}
\sqrt{\beta}=\|b_1\|_{d,\infty} \Omega_d^{-\frac{1}{d}} \frac{2}{d-2}, \qquad \Omega_d:=\pi^{\frac{d}{2}}\Gamma\left(\frac{d}{2}+1\right)
\end{align*}
(using Strichartz inequality with sharp constants \cite[Prop 2.5, 2.6, Cor.~2.9]{KPS}). 
In particular, $b(x)=x|x|^{-2}$ belongs to $\mathbf{F}_{\beta,\,\mathcal P}$ with $\beta=\left(2/(d-2) \right)^2$ (and $g \equiv 0$) (Hardy inequality). 
More generally, any vector field $b(t,x)$ such that for some $c_1$, $c_2>0$
$$
|b(t,x)|^2 \leqslant c_1|x-x_0|^{-2} + c_2|t-t_0|^{-1}\bigl(\log(e+|t-t_0|^{-1}) \bigr)^{-1-\varepsilon}, \quad \varepsilon>0, \quad (t,x) \in [0,\infty) \times \mathbb R^d,
$$
belongs to the class $\mathbf{F}_{\beta,\,\mathcal P}$ with
$\beta=c_1\left(2/(d-2) \right)^2$.
The above examples show that
the Gaussian bounds on the fundamental solution of $\partial_t-\Delta + b(t,x)\cdot\nabla$,  $b \in \mathbf{F}_{\beta,\,\mathcal P}$, are, in general, not valid. 

\smallskip

2.~If $h \in L^2(\mathbb R)$, $T:\mathbb R^d \rightarrow \mathbb R$ is a linear map, then the vector field $b(x)=h(Tx)a$, where $a \in \mathbb R^d$, is in $\mathbf{F}_{\beta,\,\mathcal P}$ with appropriate $\beta$, but $|b|$ may not be in $L^{d,\infty}_{\loc}(\mathbb R^d)$.

\smallskip
 
3.~Let $b:\mathbb R^d \rightarrow \mathbb R^d$. If $b^2$ is in the Campanato-Morrey class
$$
M_p:=\left\{v \in L^p: \|v\|_{M_p}:=\sup_{x \in \mathbb R^d, r>0} r^{2-\frac{d}{p}}\|\mathbf{1}_{B(x,r)}v\|_p < \infty\right\}
$$
for some $p>1$, then $b \in \mathbf{F}_{\beta,\,\mathcal P}$ with $\beta=\beta(\|b^2\|_{M_p})$. Here $\mathbf{1}_{B(x,r)}$ is the characteristic function of the open ball of radius $r$ centered at $x$.
 
\smallskip 
 
4. Set $L^qL^p:=L^q\bigl([0,\infty),L^p(\mathbb R^d)+L^\infty(\mathbb R^d)\bigr)$. 
We have: $$|b| \in L^qL^p \text{ with } \frac{d}{p}+\frac{2}{q} \leqslant 1 \qquad \Rightarrow \qquad b \in \mathbf{F}_{0,\,\mathcal P}:=\bigcap_{\beta>0}\mathbf{F}_{\beta,\,\mathcal P}$$
(using the H\"{o}lder inequality and the Sobolev embedding theorem).

\end{example}

The class $\mathbf{F}_{\beta,\,\mathcal P}$ contains vector fields having critical order singularities: replacing a $b \in \mathbf{F}_{\beta,\,\mathcal P}$ in \eqref{cauchy} with $cb$, $c>1$,
in general destroys e.g.~the uniqueness of weak solution of  Cauchy problem \eqref{cauchy}, \eqref{weak_cauchy2} (see \cite[Example 5]{KS}).
The class $\mathbf{F}_{0,\,\mathcal P}$ doesn't contain vector fields having critical order singularities.

The explicit dependence on the value of the relative bound $\beta$ is a crucial feature of our results.

\smallskip

We consider only real Banach spaces. Throughout this paper we use the following notation:
$$
\bigl\langle g \bigr\rangle=\bigl\langle g(\cdot) \bigr\rangle:=\int_{\mathbb R^d}g(x)dx.
$$
Let $\langle g, h\rangle$ denote the $(L^p,L^{p'})$ pairing, so that
$$\bigl\langle g\,,\,h\bigr\rangle:=\int_{\mathbb R^d}g(x)h(x)dx \qquad \bigl(g \in L^p(\mathbb R^d), h \in L^{p'}(\mathbb R^d)\bigr).$$

Before
formulating the main result, let us remind the reader the definition of a weak solution to Cauchy problem \eqref{cauchy}, \eqref{weak_cauchy2}.

\begin{definition}
A real-valued function $u \in L^\infty_{\loc}((0,\infty), L_{\loc}^2(\mathbb R^d))$ is said to be a weak solution of  equation \eqref{cauchy} if $\nabla u$ (understood in the sense of distributions) is in $L_{\loc}^1((0,\infty)\times \mathbb R^d, \mathbb R^d)$, $b \cdot \nabla u \in L_{\loc}^1((0,\infty)\times \mathbb R^d)$, and
\begin{equation}
\label{int_id}
\int_{0}^\infty \langle u, \partial_t \psi \rangle dt - \int_{0}^\infty \langle u,\Delta \psi \rangle dt +\int_{0}^\infty \langle b\cdot \nabla u ,\psi \rangle dt=0
\end{equation}
for all $\psi \in C_c^\infty((0,\infty) \times \mathbb R)$.
\end{definition}

\begin{definition}
A weak solution of \eqref{cauchy} is said to be a weak solution to Cauchy problem \eqref{cauchy}, \eqref{weak_cauchy2}
if
$
\lim_{t \rightarrow +0} \langle u(t), \xi \rangle=\langle f,\xi\rangle
$
for all $\xi \in L^{2}(\mathbb R^d)$ having compact support.
\end{definition}

\begin{theorem}[Main result]
\label{mainthm}
Let $d \geqslant 3$. Suppose a vector field
 $b(\cdot,\cdot)$ belongs to the class $\mathbf{F}_{\beta,\,\mathcal P}$.
If 
$\beta<d^{-2}$,
then there exists a Feller evolution family $(U(t,s))_{0 \leqslant s \leqslant t} \subset \mathcal L\bigl(C_\infty(\mathbb R^d) \bigr)$ that produces the weak solution to Cauchy problem \eqref{cauchy}, \eqref{weak_cauchy2}, i.e.~{\rm(\textbf{E1})--(\textbf{E4})} hold true.

\end{theorem}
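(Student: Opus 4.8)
\medskip
\noindent\emph{Proof proposal.} The plan is to construct $(U(t,s))_{0\leqslant s\leqslant t}$ as a limit of the classical evolution families $(U_n(t,s))$ attached to bounded smooth approximations $b_n$ of $b$, and to push (\textbf{E1})--(\textbf{E4}) through the limit by means of a priori estimates uniform in $n$. First I would fix mollified and truncated vector fields $b_n\in C_0^\infty([0,\infty)\times\mathbb R^d,\mathbb R^d)$ with $b_n\to b$ in $L^2_{\loc}([0,\infty)\times\mathbb R^d,\mathbb R^d)$ and $b_n\in\mathbf{F}_\beta$ with one common constant $\beta$ and one common $g_\beta\in L^1_{\loc}([0,\infty))$ (the form bound survives mollification). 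For bounded smooth $b_n$, classical parabolic theory provides a sub-Markov Feller evolution family $U_n(t,s)\in\mathcal L(C_\infty(\mathbb R^d))$ solving \eqref{cauchy}, \eqref{weak_cauchy2} with $b$ replaced by $b_n$, satisfying (\textbf{E1})--(\textbf{E4}); moreover the maximum principle (no zero-order term) gives the positivity and the $L^\infty$-contractivity of (\textbf{E3}) \emph{uniformly in $n$}. Write $u_n(t,s,\cdot):=U_n(t,s)f$ for $f\in C_0^\infty(\mathbb R^d)$.

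The analytic heart is an a priori estimate, uniform in $n$, of the shape
$$\|\nabla u_n(t,s,\cdot)\|_p \;\leqslant\; C\,(t-s)^{-\gamma}\,\|\nabla f\|_q, \qquad 0\leqslant s<t,$$
for suitable exponents $q\leqslant p$ with $p>d$, valid exactly when $\beta<\frac{4}{d^2}\omega_d^2$. To obtain it I would differentiate the equation, test the equation for $\nabla u_n$ against $|\nabla u_n|^{r-2}\nabla u_n$, and control the drift contribution — essentially $\langle\nabla(b_n\cdot\nabla u_n),|\nabla u_n|^{r-2}\nabla u_n\rangle$ — by applying the form-boundedness inequality to $\varphi\sim|\nabla u_n|^{r/2}$; a Cauchy--Schwarz and Young splitting with a free parameter, optimized so as to produce the constant $\omega_d=\frac{d-1}{2d-3}$, forces the quadratic form in the diffusive quantity $\|\nabla|\nabla u_n|^{r/2}\|_2^2$ to remain coercive precisely under the stated smallness of $\beta$ (the lower-order term carrying $g_\beta$ being absorbed by Gr\"onwall). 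A Moser-type iteration in $r$, fed by the Sobolev embedding and this energy balance, then lifts the $L^2$ bound on $u_n$ (obtained by testing against $u_n$ itself, which needs only $\beta<1$) to the displayed $L^p$-in-space gradient bound; keeping the constants sharp enough to reach the threshold $\beta<\frac{4}{d^2}\omega_d^2$ is, I expect, the main obstacle.

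Granting the gradient estimate, $\{u_n(t,s,\cdot)\}$ is equibounded in $\sup$-norm by (\textbf{E3}) and, since $\nabla u_n(t,s,\cdot)\in L^p$ with $p>d$, equicontinuous in $x$ locally uniformly in $(t,s)$, $s<t$, by Morrey's inequality; equicontinuity in $(t,s)$ follows from the equation together with these bounds. Arzel\`a--Ascoli yields a subsequence converging locally uniformly to $u(t,s,\cdot)=:U(t,s)f$. Decay at spatial infinity — needed for $U(t,s)f\in C_\infty(\mathbb R^d)$ — I would extract from a localized or weighted version of the energy estimate (alternatively, from a Gaussian majorant of $u_n$ away from $\supp f$), again uniform in $n$; since $C_0^\infty$ is dense in $C_\infty$ and the $U_n(t,s)$ are $\sup$-contractions, $U(t,s)$ then extends to all of $C_\infty(\mathbb R^d)$.

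It remains to verify (\textbf{E1})--(\textbf{E4}) for the limit. Passing to the limit in the weak identity \eqref{int_id} gives (\textbf{E4}); the only nontrivial term is $\int b_n\cdot\nabla u_n\,\psi\to\int b\cdot\nabla u\,\psi$, which I would handle by writing $b_n\cdot\nabla u_n-b\cdot\nabla u=(b_n-b)\cdot\nabla u_n+b\cdot(\nabla u_n-\nabla u)$ and using $b_n\to b$ in $L^2_{\loc}$ with the uniform gradient bound for the first piece, and the form bound together with the local strong convergence of $\nabla u_n$ for the second. Positivity and contractivity (\textbf{E3}) and strong continuity (\textbf{E2}) descend at once from the approximants and the equicontinuity; the evolution property (\textbf{E1}) and the fact that the limit is independent of the approximating sequence follow from uniqueness of the weak solution of \eqref{cauchy}, \eqref{weak_cauchy2} in the energy class, guaranteed by the standard $L^2$ argument under $\beta<1$ (a fortiori under $\beta<\frac{4}{d^2}\omega_d^2$). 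Finally the Riesz--Markov theorem, as recalled in Section \ref{introsect}, produces the associated Feller process.
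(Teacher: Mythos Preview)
Your overall architecture is close to the paper's, but the core analytic device is different and one step does not go through as written. In the paper the a priori gradient bound (Lemma~\ref{thmapr}) is obtained by a \emph{single} energy estimate at level $q$ for $\nabla u_m$ --- no iteration is used there, and the constraint $\beta<\frac{4}{q^2}\omega_q^2$ is what makes that one inequality close for $q$ slightly larger than $d$. The Moser--type iteration (this is the parabolic Kovalenko--Semenov procedure of Section~\ref{itersect}) is run instead on the \emph{difference} $u_m-u_n$: the iteration inequality of Lemma~\ref{lem2}, with the gradient bound of Lemma~\ref{thmapr} inserted to control the factor $\|\nabla u_m\|_{L^{2\lambda'}L^{2\sigma}}$, yields Corollary~\ref{lem3}, and together with the $L^r$--Cauchy property of Lemma~\ref{lem1} this proves that $\{u_m\}$ is fundamental in $L^\infty(D_T,C_\infty(\mathbb R^d))$. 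The reason for insisting on the Cauchy property \emph{uniformly over all of $D_T$, including the diagonal $t=s$}, is that it transfers strong continuity (\textbf{E2}) from the $U_m$ to the limit for free; the paper flags exactly this point after Theorem~\ref{mainthm} as the improvement over \cite{KS}.

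Your Arzel\`a--Ascoli route runs into a real obstacle at this step. Morrey's inequality with a uniform $L^p$ gradient bound, $p>d$, does give spatial equicontinuity, but you still need equicontinuity of $(t,s)\mapsto u_n(t,s,\cdot)$ in $C_\infty(\mathbb R^d)$, uniformly in $n$, and in particular a uniform modulus as $t\downarrow s$. The sentence ``equicontinuity in $(t,s)$ follows from the equation together with these bounds'' does not survive inspection: from $\partial_t u_n=\Delta u_n+b_n\cdot\nabla u_n$ neither term on the right is controlled in a norm that yields $\sup$-norm time continuity uniformly in $n$ --- the $b_n$ are only uniformly form-bounded, not uniformly in any $L^p$ with $p>d$, and no uniform second-order bound on $u_n$ is available. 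Without this you get at best a subsequential limit on $\{s<t\}$ with no control at the diagonal, so (\textbf{E2}), and with it the Feller property, is not established. (Your uniqueness argument does rescue independence of the subsequence and (\textbf{E1}), but not (\textbf{E2}).) This is precisely the difficulty that the paper's Cauchy-in-$L^\infty(D_T,C_\infty)$ mechanism is designed to bypass.
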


Theorem \ref{mainthm} in the stationary case $b:\mathbb R^d \rightarrow \mathbb R^d$
and 
under the extra assumption $|b| \in L^2(\mathbb R^d) + L^\infty(\mathbb R^d)$
is due to \cite{KS}. 
The extra assumption is used there in the verification that
the constructed limit of approximating semigroups is strongly continuous in $C_\infty(\mathbb R^d)$ (i.e.~in the verification of the assumptions of the Trotter approximation theorem in $C_\infty(\mathbb R^d)$). We run their iterative procedure differently, so that it automatically yields strong continuity. 
(Generally speaking, unless $b$ is sufficiently regular in $t$, the non-stationary case presents the next level of difficulty compared to the stationary case.
It is the inherent flexibility of the method of \cite{KS} 
(which, we believe, goes beyond $\partial_t-\Delta + b(t,x)\cdot\nabla$) 
that allows us to carry out the construction of the process for a non-stationary $b(\cdot,\cdot) \in \mathbf{F}_{\beta,\,\mathcal P}$.)

Let us also note that, in the assumptions of Theorem \ref{mainthm}, given $p>(1-\sqrt{\beta/4})^{-1}$, the formula
\begin{equation*}
U_p(t,s):=\biggl(U(t,s)|_{L^p(\mathbb R^d) \cap C_\infty(\mathbb R^d)}\biggr)^{\clos}_{L^p(\mathbb R^d) \rightarrow L^p(\mathbb R^d)},
\end{equation*}
determines a (strongly continuous) evolution family in 
$\mathcal L\bigl(L^p(\mathbb R^d)\bigr)$, cf.~\cite{Se}.
The proof is obtained from Theorem \ref{mainthm}, estimate \eqref{est_Lp} below
and the Dominated Convergence Theorem.

\smallskip

We now briefly comment on the relationship between this work and the existing results.

1.~First, for
$|b| \in L^qL^p$ (cf.~Example \ref{ex1}.3), $ \frac{d}{p}+\frac{2}{q}< 1,$
the associated diffusion has been constructed in \cite{KR} as the strong solution of the SDE $dX_t=b(t,X_t)dt+\frac{1}{2}dW_t$, $X_0=x_0 \in \mathbb R^d$.

2.
Recall the definition of the parabolic Kato class $\mathbf{K}_{\beta,\,\mathcal P}^{d+1}$:
$$
\mathbf{K}_{\beta,\,\mathcal P}^{d+1}:=\left\{b \in L^1_{\loc}([0,\infty) \times \mathbb R^d,\mathbb R^d): \inf_{r>0} k^{1,1}(b,r) \leqslant \beta,\,\, \inf_{r>0} k^\infty(b,r) \leqslant \beta\right\},
$$
where
$$
k^{1,1}(b,r):=\sup_{u \geqslant 0,\,x \in \mathbb R^d} \int_{u}^{u+r} \int_{\mathbb R^d} \Gamma_{t-u}(x-y)\frac{|b(t,y)|}{\sqrt{t-u}} dy dt,
$$
$$
k^{\infty}(b,r):=
\sup_{u \geqslant r,\,x \in \mathbb R^d} \int_{u}^{u+r} \int_{\mathbb R^d} \Gamma_{u+r-t}(x-y)\frac{|b(t-r,y)|}{\sqrt{u+r-t}} dy dt,
$$
and $\Gamma_t(z):=(4\pi t)^{-\frac{d}{2}}e^{-\frac{|z|^2}{4t}}$.
If $b \in \mathbf{K}_{\beta,\,\mathcal P}^{d+1}$ with $\beta>0$ sufficiently small, then the fundamental solution of \eqref{cauchy} admits local in time Gaussian upper and lower bounds, see \cite{Zh}, which, in turn, yield the corresponding Feller evolution family (in $C_b(\mathbb R^d):=\{f \in C(\mathbb R^d): \sup_{x}|f(x)|<\infty\}$ endowed with the $\sup$-norm).
Note that 
$\mathbf{K}_{0,\,\mathcal P}^{d+1} - \mathbf{F}_{\beta,\,\mathcal P} \neq \varnothing$, where $\mathbf{K}_{0,\,\mathcal P}^{d+1}:=\cap_{\beta>0}\mathbf{K}_{\beta,\,\mathcal P}^{d+1}$ (on the other hand, $L^d(\mathbb R^d,\mathbb R^d) - \mathbf{K}_{\beta,\,\mathcal P}^{d+1} \cap \{\mathsf f:\mathbb R^d \rightarrow \mathbb R^d\} \neq \varnothing$).

\smallskip

3.~In the stationary case $b:\mathbb R^d \rightarrow \mathbb R^d$, it has been shown in \cite{Ki} that the associated Feller process exists for vector fields $b$ in the class
$$
\mathbf{F}_\beta^{\frac{1}{2}}:=\left\{b \in L^1_{\loc}(\mathbb R^d,\mathbb R^d): \bigl\||b|^{\frac{1}{2}}(\lambda-\Delta)^{-\frac{1}{4}}\bigr\|^2_{L_2 \rightarrow L_2} \leqslant \sqrt{\beta}\,\,  \text{ for some }\lambda=\lambda_\beta>0\right\}.
$$
In particular, the class $\mathbf{F}_\beta^{\frac{1}{2}}$ contains vector fields of the form $b:=b_1+b_2$, where $b_1 \in \mathbf{F}_{\beta}:=\mathbf{F}_{\beta,\mathcal P} \cap \{\mathsf f:\mathbb R^d \rightarrow \mathbb R^d\}$,
$b_2 \in \mathbf{K}_\beta^{d+1}:=\mathbf{K}^{d+1}_{\beta,\,\mathcal P} \cap \{\mathsf f:\mathbb R^d \rightarrow \mathbb R^d\}$.

\begin{remark}
We leave out the $L^p$-theory of $\partial_t - \Delta + b(t,x)\cdot\nabla$ with $b \in \mathbf{F}_{\beta,\,\mathcal P}$, $1<\beta<4$, or with $b$ in a parabolic analogue of the class $\mathbf{F}_\beta^{\frac{1}{2}}$.
\end{remark}

\noindent\textbf{Acknowledgements.~}\textit{I am deeply grateful to Yu.A.~Semenov for many important comments, and constant attention throughout this work. }

\section{Proof of Theorem \ref{mainthm}}

\label{mainthmproof_sect}

We will need a regular approximation of $b$:  vector fields
$\{b_m\}_{m=1}^\infty \subset C_c^\infty([0,\infty) \times \mathbb R^d, \mathbb R^d)$ that satisfy
$b_m \rightarrow b$ in $L^2_{\loc}([0,\infty) \times \mathbb R^d, \mathbb R^d),$ and
\begin{equation}
\label{bc_m}
\tag{$\mathbf{BC}_m$}
\int_0^\infty \|b_{m}(t,\cdot)\varphi(t,\cdot)\|_2^2 dt \leqslant \left(\beta + \frac{1}{m} \right)\int_0^\infty\|\nabla \varphi(t,\cdot)\|_2^2 dt+\int_0^\infty g(t)\|\varphi(t,\cdot)\|_2^2dt
\end{equation}
for all $\varphi \in C_c^\infty([0,\infty) \times \mathbb R^d)$.
(Such $b_m$'s can be constructed by the formula
$
b_m:=\eta_m \ast \mathbf 1_m b, 
$
where $\mathbf 1_m$ is the characteristic function of set $\{(t,x) \in \mathbb R \times \mathbb R^d : |b(t,x)| \leqslant m, |x| \leqslant m, 0 \leqslant |t| \leqslant m\}$, $\ast$ is the convolution on $\mathbb R \times \mathbb R^d$, and $\{\eta_m\} \subset C_c^\infty(\mathbb R \times \mathbb R^d)$ is an appropriate family of mollifiers.) 

Due to the strict inequality $\beta<d^{-2}$, we may assume without loss of generality that $b_m$'s satisfy ($\mathbf{BC}_m$) with $\beta$ in place of $\beta + \frac{1}{m}$.

\smallskip

The construction of the Feller evolution family goes as follows. 
Fix some $T>0$. Denote $$D_T:=\{(s,t) \in \mathbb R^2: 0 \leqslant s \leqslant t \leqslant T\}.$$
Let $(U_m(t,s))_{0 \leqslant s \leqslant t} \subset \mathcal L(C_\infty(\mathbb R^d))$ be the Feller
evolution family for the equation
\begin{equation}
\label{approxeq}
(\partial_t -\Delta + b_m(t,x)\cdot \nabla)u=0.
\end{equation}
Given a $f \in C_c^\infty(\mathbb R^d)$, we define
\begin{equation}
\label{limU}
U f:=\lim_{m \rightarrow \infty} U_m f \quad \text{in} \quad L^\infty\bigl(D_T, C_\infty(\mathbb R^d)\bigr) 
\end{equation}
Assuming that the convergence in \eqref{limU} has been established, we note that $U_m$ is $L^\infty$-contractive and $C_c^\infty(\mathbb R^d)$ is dense in $C_\infty(\mathbb R^d)$, so $U=(U(t,s))_{0 \leqslant s \leqslant t}$ extends  to a strongly continuous family of bounded linear operators in $\mathcal L\bigl(C_\infty(\mathbb R^d)\bigr)$, which we denote again by $(U(t,s))_{0 \leqslant s \leqslant t}$.

\begin{proposition}
\label{lem0}
In the assumptions of Theorem \ref{mainthm} $(U(t,s))_{0 \leqslant s \leqslant t}$ defined by \eqref{limU} satisfies {\rm{\rm(}\textbf{E1}{\rm)}-{\rm(}\textbf{E4}{\rm)}}.
\end{proposition}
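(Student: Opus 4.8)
The plan is to verify (\textbf{E1})--(\textbf{E4}) for the limit family $(U(t,s))$ by transferring the corresponding properties of the approximants $(U_m(t,s))$ through the convergence \eqref{limU}, with the real work being the three a priori estimates that make such a transfer possible: $L^\infty$-contractivity (already recorded as \eqref{est1}), an $L^p$-estimate $\|u(t,s,\cdot)\|_p \leqslant C_T \|f\|_p$ uniform in $m$, and — this is where the restriction $\beta < \tfrac{4}{d^2}\omega_d^2$ enters — the gradient estimate bounding $\|\nabla u(t,s,\cdot)\|_p$ by $\|\nabla f\|_q$ of Lemma \ref{thmapr} (the Moser-type iteration alluded to in the abstract). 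I will take all of these as available from the earlier parts of the paper.

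First I would establish (\textbf{E3}). Each $U_m(t,s)$ is positivity-preserving and $L^\infty$-contractive (it is a genuine Feller evolution family solving \eqref{approxeq}), and both properties are closed under the uniform limit in $L^\infty(D_T, C_\infty(\mathbb R^d))$: if $f \geqslant 0$ then $U_m(t,s)f \geqslant 0$ for every $m$, hence $U(t,s)f \geqslant 0$; and $\|U(t,s)f\|_\infty = \lim_m \|U_m(t,s)f\|_\infty \leqslant \|f\|_\infty$. The extension to all of $C_\infty(\mathbb R^d)$ by density preserves both, so (\textbf{E3}) holds. Next, (\textbf{E1}): the identity $U_m(s,s)=\Id$ passes to the limit pointwise, and the Chapman--Kolmogorov relation $U_m(t,s)=U_m(t,r)U_m(r,s)$ passes to the limit because $U_m(r,s)f \to U(r,s)f$ in $\sup$-norm and $U_m(t,r)$ are uniformly $\sup$-bounded, so $U_m(t,r)U_m(r,s)f \to U(t,r)U(r,s)f$; combined with the left side converging to $U(t,s)f$ this gives (\textbf{E1}) on $C_0^\infty$, hence on $C_\infty$ by density and uniform boundedness.

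For (\textbf{E2}), strong continuity, the point is that the convergence in \eqref{limU} is \emph{uniform} over $D_T$: each $(t,s)\mapsto U_m(t,s)f$ is continuous from $D_T$ into $(C_\infty(\mathbb R^d),\|\cdot\|_\infty)$ (the approximating problems are classical and well-behaved), and a uniform limit of continuous $C_\infty$-valued maps is continuous, so $(t,s)\mapsto U(t,s)f$ is $\sup$-norm continuous for each $f\in C_0^\infty$; the uniform bound \eqref{est1} then upgrades this to strong continuity for all $f\in C_\infty(\mathbb R^d)$ via a standard $3\varepsilon$-argument. This is precisely the place where the modified iteration pays off: the construction delivers the convergence already in the strong topology of $L^\infty(D_T,C_\infty)$, so no extra hypothesis like $b\in L^2+L^\infty$ is needed. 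Finally (\textbf{E4}): I would fix $\psi\in C_0^\infty((s,\infty)\times\mathbb R^d)$, write the weak formulation \eqref{int_id} for $u_m:=U_m(\cdot,s)f$ (valid since $u_m$ is classical), and pass to the limit. The terms $\int\langle u_m,\partial_t\psi\rangle$ and $\int\langle u_m,\Delta\psi\rangle$ converge because $u_m\to u$ in $L^\infty(D_T,C_\infty)\hookrightarrow L^2_{\loc}$; the genuinely delicate term is $\int\langle b_m\cdot\nabla u_m,\psi\rangle$, which I would handle by showing $\nabla u_m\to\nabla u$ in $L^q_{\loc}$ (from the uniform gradient bound of Lemma \ref{thmapr} plus weak compactness, identifying the limit with the distributional gradient of $u$) and $b_m\to b$ in $L^2_{\loc}$ with the products controlled via the form-boundedness $(\mathbf{C_2})$ and Hölder; simultaneously this shows $b\cdot\nabla u\in L^1_{\loc}$, so $u$ qualifies as a weak solution. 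The attainment of the initial condition \eqref{weak_cauchy2}, $\langle u(t,s,\cdot),\xi\rangle\to\langle f,\xi\rangle$ as $t\to s^+$, follows from $U(s,s)=\Id$ in (\textbf{E1}) together with the $\sup$-norm continuity from (\textbf{E2}).

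The main obstacle I anticipate is the convergence of the drift term $\int_0^\infty\langle b_m\cdot\nabla u_m,\psi\rangle\,dt$: it requires simultaneous control of two factors that each only converge in a local-in-space sense, and it is exactly the step that forces the use of the gradient a priori estimate and hence the smallness condition on $\beta$ — without a uniform $L^p_{\loc}$ bound on $\nabla u_m$ one cannot even make sense of $b\cdot\nabla u$ in the limit, let alone identify \eqref{int_id}. Everything else is a matter of routinely propagating closed properties through a uniform limit and a density argument.
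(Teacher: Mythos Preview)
Your treatment of (\textbf{E1})--(\textbf{E3}) is correct and matches the paper's argument: these properties pass through the uniform limit \eqref{limU} and the density extension without difficulty.

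The gap is in (\textbf{E4}). You propose to control $\nabla u_m$ via Lemma \ref{thmapr}, but that estimate reads $\|\nabla u_m\|_{L^{q/(1-\alpha)}([s,\tau],L^{qd/(d-2+2\alpha)})} \leqslant C\|\nabla f\|_q$ --- it is an estimate in terms of $\|\nabla f\|_q$. Property (\textbf{E4}) is stated for \emph{all} $f \in C_\infty(\mathbb R^d)$, and for a generic such $f$ the quantity $\|\nabla f\|_q$ need not be finite (or even defined). So Lemma \ref{thmapr} gives you nothing for the initial data you must handle. Nor can you recover by first proving (\textbf{E4}) on $C_0^\infty$ and then extending by density: to pass to the limit along $f_k \to f$ in $\sup$-norm you would need gradient bounds on $U(\cdot,\cdot)f_k$ depending only on $\|f_k\|_\infty$, and Lemma \ref{thmapr} does not provide that.

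The paper resolves this with a separate weighted energy estimate (Claim \ref{dontrun2}): with $\rho_\delta(x)=(1+\delta|x|^2)^{-k}$, $k>d/2$ (so $\rho_\delta \in L^1(\mathbb R^d)$), one has
\[
\int_s^\tau \langle (\nabla u_m)^2 \rho_\delta \rangle\,dt \;\leqslant\; c_1\langle f^2 \rho_\delta\rangle + c_2\|f\|_\infty^2,\qquad f \in C_\infty(\mathbb R^d),
\]
uniformly in $m$. The right-hand side is finite for every $f \in C_\infty(\mathbb R^d)$, which gives weak $L^2_{\loc}$-compactness of $\{\nabla u_m\}$ and hence $\nabla u \in L^2_{\loc}$, $b\cdot\nabla u \in L^1_{\loc}$, and the passage to the limit in the drift term. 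This estimate is proved by multiplying \eqref{eq1} by $v_m\rho_\delta$ and only uses $\beta<1$.

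This also corrects a misattribution in your last paragraph: the strict smallness $\beta<\tfrac{4}{d^2}\omega_d^2$ is \emph{not} what makes (\textbf{E4}) work. That constraint is needed only for Lemma \ref{thmapr}, which feeds into Corollary \ref{lem3} and hence into the convergence \eqref{limU} itself. Once \eqref{limU} is in hand, the verification of (\textbf{E4}) requires only $\beta<1$ (cf.\ the paper's Remark after Lemma \ref{lem1}).
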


The main difficulty is in establishing the convergence in \eqref{limU}. The proof of the convergence uses a parabolic variant of the iterative procedure of \cite{KS}.

\subsection{Proof of the convergence in \eqref{limU}: a parabolic variant of the iterative procedure of Kovalenko-Semenov}

\label{itersect}

Fix $f \in C_c^\infty(\mathbb R^d)$.
Set $$u_m(t)=U_m(t,s)f, \quad t \geqslant s.$$

\begin{lemma} [a priori estimate]
\label{thmapr}
Let $d \geqslant 3$. Suppose $b$ is in $\mathbf{F}_{\beta,\,\mathcal P}$ with $\beta<d^{-2}$, $q \in \left(d,\beta^{-\frac{1}{2}}\right)$. Then
$$
\|\nabla u_m\|_{L^\infty([s,\tau],L^q(\mathbb R^d))} + C_1 \|\nabla u_m\|_{L^q([s,\tau],L^{\frac{qd}{d-2}}(\mathbb R^d))}
\leqslant C\|\nabla f\|_{q}, \quad s \leqslant \tau \leqslant T,
$$
where constants $C_1=C_1(q,\beta)>0$, $C=C(q,T)<\infty$, do not depend on $m$ or $(s,\tau)$.
\end{lemma}

\begin{remark}
The a priori estimate of Lemma \ref{thmapr} is one of the main results of the paper. It is the basis for the approach as a whole (for the corresponding result in the elliptic case see \cite[Lemma 5]{KS}).

\end{remark}

We subtract the approximating equations \eqref{approxeq} for $b_m$, $b_n$, and integrate to obtain:

\begin{lemma}
\label{lem2}
Suppose $b \in \mathbf{F}_{\beta,\,\mathcal P}$ with $\beta<4$. Let $0 < \alpha < 1$.
There exist $h>0$, $k=k(\beta)>1$ and a $m_0$ such that for all $m,n \geqslant m_0$,
for all $p \geqslant p_0>\frac{2}{2-\sqrt{\beta}}$ we have
\begin{multline}
\label{iterineq}
\|u_m-u_n\|_{L^{\frac{p}{1-\alpha}}([s,s+h],L^{\frac{pd}{d-2+2\alpha}}(\mathbb R^d))}\\ \leqslant \left(C_0 \beta \|\nabla u_m\|^2_{L^{2\lambda'}([s,s+h],L^{2\sigma'}(\mathbb R^d))}\right)^{\frac{1}{p}}(p^{2k})^{\frac{1}{p}}\|u_m-u_n\|_{L^{(p-2)\lambda}([s,s+h],L^{(p-2)\sigma}(\mathbb R^d))}^{1-\frac{2}{p}},
\end{multline}
for any $\sigma$ such that $1<\sigma<\frac{d}{d-2+2\alpha}$, $\frac{1}{\sigma}+\frac{1}{\sigma'}=1$, and $\frac{1/(1-\alpha)}{\lambda}=\frac{d/(d-2+2\alpha)}{\sigma}$, $\frac{1}{\lambda}+\frac{1}{\lambda'}=1,$ for a constant $C_0=C_0(h)<\infty$ that doesn't depend on $m$ or $s \leqslant T$.
\end{lemma}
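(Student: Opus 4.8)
The plan is to derive \eqref{iterineq} from an energy estimate for $w:=u_m-u_n$. Writing $b_m\cdot\nabla u_m-b_n\cdot\nabla u_n=b_n\cdot\nabla w+(b_m-b_n)\cdot\nabla u_m$, we see that $w$ is the classical solution of
\[
(\partial_t-\Delta)w=b_n\cdot\nabla w+(b_m-b_n)\cdot\nabla u_m ,\qquad w(+s,s,\cdot)=0 ,
\]
and, since $b_n$ has compact support, $w$ and all its derivatives decay rapidly in $x$, so the manipulations below are legitimate. First I would multiply by $w|w|^{p-2}$, integrate over $[s,\tau]\times\mathbb R^d$ (after the customary regularization of $|w|^{p/2}$), and set $v:=|w|^{p/2}$; using $|w|^{p-2}|\nabla w|^2=\tfrac4{p^2}|\nabla v|^2$, $w|w|^{p-2}\nabla w=\tfrac2p v\nabla v$ and $w(s)=0$, one obtains
\[
\tfrac1p\|w(\tau)\|_p^p+\tfrac{4(p-1)}{p^2}\int_s^\tau\|\nabla v\|_2^2\,dt=\tfrac2p\int_s^\tau\langle b_n v,\nabla v\rangle\,dt+\int_s^\tau\langle(b_m-b_n)\cdot\nabla u_m,\,w|w|^{p-2}\rangle\,dt .
\]

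Next I would absorb the transport term. Estimating it by $\tfrac2p\|b_nv\|_2\|\nabla v\|_2$, then applying Cauchy--Schwarz \emph{in $t$} (this is forced on us because the form bound $(\mathbf{C_2})$ for $b_n\in\mathbf{F}_{\beta+1/n}$ is time-integrated) and Young's inequality, one moves a multiple of $\int_s^\tau\|\nabla v\|_2^2\,dt$ to the left; the surviving coefficient of the Dirichlet form is $\geq\tfrac2p\bigl(2(1-\tfrac1p)-\sqrt{\beta+1/m_0}\bigr)$, which stays $\geq c/p$ for a constant $c>0$ provided $\beta<4$, $p$ is beyond the threshold $\tfrac2{2-\sqrt\beta}$, and $m_0$ is large enough -- this is exactly where the hypotheses on $\beta$ and $p$ are used. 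The leftover $g$-term from $(\mathbf{C_2})$ is removed by Gronwall's inequality, at the cost of a factor $e^{C_T}$ ($C_T$ denoting a constant depending on $T$, $\beta$, $g$, $d$, possibly different at each occurrence). The upshot is
\[
\|v\|_{L^\infty([s,T],L^2)}^2+\|\nabla v\|_{L^2([s,T],L^2)}^2\le C_T\,p\,J,\qquad J:=\int_s^T\bigl|\langle(b_m-b_n)\cdot\nabla u_m,\,w|w|^{p-2}\rangle\bigr|\,dt .
\]

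Then I would estimate $J$. Writing $|w|^{p-1}=v z$ with $z:=|w|^{(p-2)/2}$, Cauchy--Schwarz in $x$ (pairing $|b_m-b_n|v$ with $|\nabla u_m|z$) followed by H\"older in $x$ with conjugate exponents $\sigma,\sigma'$ bounds the integrand of $J$, for fixed $t$, by $\|(b_m-b_n)v\|_2\,\|\nabla u_m\|_{2\sigma}\,\|w\|_{(p-2)\sigma'}^{(p-2)/2}$. Integrating in $t$ with H\"older exponents $(2,2\lambda',2\lambda)$ -- admissible since $\tfrac12+\tfrac1{2\lambda'}+\tfrac1{2\lambda}=1$ by the stated relations -- and then using $|b_m-b_n|^2\le2(|b_m|^2+|b_n|^2)$ with $(\mathbf{C_2})$ for $b_m$ and $b_n$ to bound $\|(b_m-b_n)v\|_{L^2([s,T],L^2)}^2$ by $C_T\bigl((\beta+1/m_0)\|\nabla v\|_{L^2([s,T],L^2)}^2+\|v\|_{L^\infty([s,T],L^2)}^2\bigr)$, one gets $J$ bounded by a constant times
\[
\sqrt{\beta+1/m_0}\;\bigl(\|v\|_{L^\infty([s,T],L^2)}^2+\|\nabla v\|_{L^2([s,T],L^2)}^2\bigr)^{1/2}\,\|\nabla u_m\|_{L^{2\lambda'}([s,T],L^{2\sigma})}\,\|w\|_{L^{(p-2)\lambda}([s,T],L^{(p-2)\sigma'})}^{(p-2)/2} .
\]
Substituting the energy bound $\|v\|_{L^\infty L^2}^2+\|\nabla v\|_{L^2 L^2}^2\le C_TpJ$ and solving the resulting inequality for $J$ gives
\[
J\le C_T\,p\,(\beta+1/m_0)\,\|\nabla u_m\|_{L^{2\lambda'}([s,T],L^{2\sigma})}^2\,\|w\|_{L^{(p-2)\lambda}([s,T],L^{(p-2)\sigma'})}^{p-2} .
\]

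Finally, the parabolic Gagliardo--Nirenberg interpolation $\|v\|_{L^{2/(1-\alpha)}([s,T],L^{2d/(d-2+2\alpha)})}\le C\|v\|_{L^\infty L^2}^\alpha\|\nabla v\|_{L^2 L^2}^{1-\alpha}\le C\bigl(\|v\|_{L^\infty L^2}^2+\|\nabla v\|_{L^2 L^2}^2\bigr)^{1/2}$, together with $\|v\|_{L^{2/(1-\alpha)}L^{2d/(d-2+2\alpha)}}^2=\|w\|_{L^{p/(1-\alpha)}L^{pd/(d-2+2\alpha)}}^{p}$ (since $v=|w|^{p/2}$), the energy bound and the estimate for $J$, yield
\[
\|w\|_{L^{p/(1-\alpha)}([s,T],L^{pd/(d-2+2\alpha)})}^{p}\le C_T\,p^{2}\,(\beta+1/m_0)\,\|\nabla u_m\|_{L^{2\lambda'}([s,T],L^{2\sigma})}^{2}\,\|w\|_{L^{(p-2)\lambda}([s,T],L^{(p-2)\sigma'})}^{p-2} .
\]
Taking $p$-th roots, bounding the factor $p^{2}$ (and any further powers of $p$ produced by a less careful accounting) by $p^{2k}$ for a suitable $k=k(\beta)>1$, and absorbing the remaining constants -- independent of $m$, $n$ and $s$ -- into $C_0=C_0(T)$ yields \eqref{iterineq}. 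The crux is the second step: after the form-bound absorption the Dirichlet form must survive with coefficient $\gtrsim1/p$ uniformly in large $m,n$ and in $p$ above $\tfrac2{2-\sqrt\beta}$, which is what forces Cauchy--Schwarz in time before invoking $(\mathbf{C_2})$; a secondary bookkeeping point is that the relation $\tfrac{1/(1-\alpha)}{\lambda}=\tfrac{d/(d-2+2\alpha)}{\sigma'}$ is exactly what makes the H\"older exponents of the third step compatible with the parabolic interpolation of the fourth, so that $\|\nabla u_m\|$ ends up in the norm $L^{2\lambda'}([s,T],L^{2\sigma}(\mathbb R^d))$ claimed in \eqref{iterineq}.
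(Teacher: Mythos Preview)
Your argument is correct and follows the paper's overall strategy: derive an energy identity for $|w|^{p/2}$, absorb the transport term using the form bound, control the source term by H\"older, and close with parabolic interpolation. Two organizational differences are worth noting. First, the paper works with the gauge $v_m=e^{h(t)-h(s)}u_m$ and chooses $h'$ so that the $g$-terms cancel exactly, rather than invoking Gronwall; in particular, Cauchy--Schwarz \emph{in $t$} is not forced on anyone---the paper applies Young's inequality pointwise in $t$, integrates, and only then invokes $(\mathbf{C_2})$. Second, for the source term the paper applies Young's inequality with $a=|b_m-b_n|\,|\eta|$ and $b=|\eta|^{1-2/p}|\nabla v_n|$, so that the $a^2$-piece is absorbed into the Dirichlet form via $(\mathbf{C_2})$ and only $\tfrac{1}{2\delta}\int\langle|\eta|^{2-4/p}|\nabla v_n|^2\rangle\,dt$ survives; a careful choice of $\varepsilon,\delta$ yields the clean coefficients $p^{-k}$ and $\beta p^k$, and a final H\"older in $x$ then $t$ (Claim~\ref{cl4}) splits the remaining integral into the two norms. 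Your three-way Cauchy--Schwarz followed by substituting the energy bound back into the $J$-estimate is an equally valid route to the same endpoint.

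One small slip: from your own bound $\|(b_m-b_n)v\|_{L^2L^2}^2\le C_T\bigl((\beta+1/m_0)\|\nabla v\|_{L^2L^2}^2+\|v\|_{L^\infty L^2}^2\bigr)$ you cannot extract a clean overall factor $\sqrt{\beta+1/m_0}$, since the second summand carries no such factor. This is harmless for the application---in Corollary~\ref{lem3} the entire prefactor is absorbed into a constant $D$---but strictly speaking your argument produces $C_0=C_0(\beta,m_0,T)$ rather than $C_0(T)\cdot(\beta+1/m_0)$ as stated. The paper obtains the clean $(\beta+1/m_0)$ factor precisely because the $h$-gauge kills the $g$-contribution before it can contaminate the constant.
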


The a priori estimate of Lemma \ref{thmapr} allows to iterate the inequality \eqref{iterineq} (with a proper choice of $\alpha$, $\lambda$ and $\sigma$) in order to obtain an $L^\infty$-norm in the left-hand side, and an $L^p$-norm ($p<\infty$) (of $u_m-u_n$) in the right-hand side. 
Set $$D_{T,\,h}:=D_{T} \cap \{(s,t):0 \leqslant t-s \leqslant h\}, \quad h<T.$$

\begin{lemma}
\label{lem3}
In the assumptions of Theorem \ref{mainthm}, 
for any $ p_0>\frac{2}{2-\sqrt{\beta}}$ there exist $h>0$, constants $B<\infty$ and $\gamma:=
\bigl( 1-\frac{\sigma d }{d+2}\bigr)\bigl(1-\frac{\sigma d }{d+2}+\frac{2\sigma}{p_0} \bigr)^{-1}
>0$ {\rm(}$1<\sigma<\frac{d+2}{d}${\rm)} independent of $m,n$ such that
\begin{equation}
\label{iterineq0}
\|U_mf-U_nf\|_{L^\infty(D_{T,\,h} \times \mathbb R^d)}  \leqslant B \sup_{0 \leqslant s \leqslant T-h}\|U_mf-U_nf\|^\gamma_{L^{p_0}([s,s+h], L^{p_0}(\mathbb R^d))} \quad \text{ for all } n,m.
\end{equation}
\end{lemma}

\begin{remark}
Lemma \ref{lem3} is the key result. It moves the problem of convergence of $\{U_mf\}$ in $L^\infty$ to a space having much weaker topology (locally).
\end{remark}

That $\{U_mf\}$ does indeed converge in the weaker topology of the right-hand side of \eqref{iterineq0}
will follow from the following

\begin{lemma}
\label{lem1}
Suppose $b \in \mathbf{F}_{\beta,\,\mathcal P}$ with  $\beta<1$.
The sequence $\{U_mf\}$ from Lemma \ref{lem3} is fundamental in $L^\infty(D_T, L^r(\mathbb R^d))$, $2 \leqslant r <\infty$.
\end{lemma}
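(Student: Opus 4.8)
The plan is to establish the $r=2$ case first, i.e.\ that $\{u_m\}$ is Cauchy in $L^\infty(D_T,L^2(\mathbb R^d))$, and then to upgrade to general $r\geqslant 2$ via Corollary \ref{lem3} and interpolation. Put $w=w_{mn}:=u_m-u_n$. Since $u_m,u_n$ solve \eqref{approxeq} classically with the \emph{same} data $f\in C_0^\infty(\mathbb R^d)$ and smooth, compactly supported drifts, $w$ is a classical solution of $(\partial_t-\Delta-b_m\cdot\nabla)w=(b_m-b_n)\cdot\nabla u_n$ with $w(s,s,\cdot)=0$, and $u_m$ together with its derivatives decays at infinity (off $\supp b_m$ the equation is the heat equation), so all integrations below are legitimate. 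Testing the equation for $w$ with $w$,
\[
\tfrac12\,\tfrac{d}{d\sigma}\|w\|_2^2+\|\nabla w\|_2^2=\langle b_m\cdot\nabla w,w\rangle+\langle(b_m-b_n)\cdot\nabla u_n,w\rangle .
\]
Using the fixed-time form of $(\mathbf{C_2})$ (valid for a.e.\ $\sigma$, by testing against $\rho(t)\psi(x)$, and extended to $W^{1,2}$ test functions by density), Cauchy--Schwarz and Young's inequality, I fix $\varepsilon_1>0$ and $m_0$ with $\sqrt{\beta+1/m_0}+\varepsilon_1=:1-c_0<1$ (possible since $\beta<1$) and obtain, for all $m,n\geqslant m_0$,
\[
\tfrac12\,\tfrac{d}{d\sigma}\|w\|_2^2+c_0\|\nabla w\|_2^2\leqslant C_{\varepsilon_1}\,g(\sigma)\|w\|_2^2+\bigl|\langle(b_m-b_n)\cdot\nabla u_n,w\rangle\bigr| .
\]
The same test applied to $u_m$ alone gives $\sup_{(s,\tau)\in D_T}\|u_m(\tau,s,\cdot)\|_2\leqslant C\|f\|_2$ and $\sup_s\int_s^T\|\nabla u_m(\sigma,s,\cdot)\|_2^2\,d\sigma\leqslant C'$, uniformly in $m\geqslant m_0$; all constants here depend only on $T$, $\|g\|_{L^1[0,T]}$, $\|f\|$, and are manifestly independent of $s$.

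The crucial point is to bound $\int_s^\tau\bigl|\langle(b_m-b_n)\cdot\nabla u_n,w\rangle\bigr|\,d\sigma$ uniformly. Split $\mathbb R^d=\{|x|\leqslant 2R\}\cup\{|x|>2R\}$. On the near field $\bigl|\langle(b_m-b_n)\cdot\nabla u_n,w\rangle_{\{|x|\leqslant 2R\}}\bigr|\leqslant\|w\|_\infty\,\|(b_m-b_n)\mathbf 1_{\{|x|\leqslant 2R\}}\|_2\,\|\nabla u_n\|_2$, so integrating in $\sigma$ and using Cauchy--Schwarz in $\sigma$, the near-field term is $\lesssim\|f\|_\infty\,\|\nabla u_n\|_{L^2([s,T]\times\mathbb R^d)}\,\|b_m-b_n\|_{L^2([0,T]\times B_{2R})}$, which tends to $0$ as $m,n\to\infty$ for each fixed $R$ by $(\mathbf{C_1})$. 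For the far field I will use the \textbf{uniform tightness} of $\{u_m\}$ and $\{\nabla u_m\}$: for all $R$ large,
\[
\sup_{m\geqslant m_0,\, s,\, \tau}\int_{\{|x|>2R\}}|u_m(\tau,s,\cdot)|^2\,dx+\sup_{m\geqslant m_0,\, s}\int_s^T\!\!\int_{\{|x|>2R\}}|\nabla u_m|^2\,dx\,d\sigma\leqslant\frac{C}{R^2}.
\]

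Proving this tightness is the main obstacle, and it is precisely where the uniform form-boundedness (the \emph{same} $g$ in $(\mathbf{C_2})$ for every $m$) is essential. To prove it, pick a smooth $\phi_R$ with $0\leqslant\phi_R\leqslant 1$, $\phi_R\equiv 0$ on $B_R$, $\phi_R\equiv 1$ off $B_{2R}$, $|\nabla\phi_R|\leqslant C/R$, and compute
\[
\tfrac{d}{d\sigma}\langle u_m^2,\phi_R^2\rangle=-2\|\phi_R\nabla u_m\|_2^2-4\langle u_m\phi_R\nabla u_m,\nabla\phi_R\rangle+2\langle(\phi_R u_m)b_m,\phi_R\nabla u_m\rangle .
\]
Bounding the last term by $2\|b_m(\phi_R u_m)\|_2\|\phi_R\nabla u_m\|_2$, applying $(\mathbf{C_2})$ with $\|\nabla(\phi_R u_m)\|_2\leqslant\|\phi_R\nabla u_m\|_2+CR^{-1}\|u_m\|_2$, and using Young's inequality on the cross terms, the coefficient of $\|\phi_R\nabla u_m\|_2^2$ becomes $-2+2\sqrt{\beta+1/m_0}+\delta$, which is $<-\kappa<0$ for $\delta$ small because $\beta<1$. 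Absorbing and using $\|u_m\|_2\leqslant C\|f\|_2$ gives $\tfrac{d}{d\sigma}\langle u_m^2,\phi_R^2\rangle+\kappa\|\phi_R\nabla u_m\|_2^2\leqslant CR^{-2}+C\,g(\sigma)\langle u_m^2,\phi_R^2\rangle$. Since $\langle u_m^2(s,s,\cdot),\phi_R^2\rangle=\langle f^2,\phi_R^2\rangle=0$ once $B_R\supset\supp f$, Gronwall's inequality gives $\sup_\tau\langle u_m^2(\tau,s,\cdot),\phi_R^2\rangle\leqslant CR^{-2}$ and then, integrating, $\int_s^T\|\phi_R\nabla u_m\|_2^2\,d\sigma\leqslant CR^{-2}$, uniformly in $m\geqslant m_0$ and $s$; as $\phi_R\equiv 1$ on $\{|x|>2R\}$ this is the displayed tightness, which also holds for $u_n$ and, by the triangle inequality, for $w$. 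Consequently, writing $\langle(b_m-b_n)\cdot\nabla u_n,w\rangle_{\{|x|>2R\}}=\langle(b_m-b_n)(\phi_R w),\nabla u_n\rangle_{\{|x|>2R\}}$ and using $(\mathbf{C_2})$ for $\phi_R w$, $\|(b_m-b_n)\phi_R w\|_2\leqslant 2\sqrt{\beta+1/m_0}\bigl(\|\phi_R\nabla w\|_2+CR^{-1}\|w\|_2\bigr)+2\sqrt{g(\sigma)}\|\phi_R w\|_2$; multiplying by $\|\nabla u_n\|_2$, integrating in $\sigma$, and invoking the tightness bounds together with $\sup_\sigma\|w\|_2\leqslant C\|f\|_2$ and $\|\nabla u_n\|_{L^2([s,T]\times\mathbb R^d)}\leqslant C$, one obtains $\int_s^\tau\bigl|\langle(b_m-b_n)\cdot\nabla u_n,w\rangle_{\{|x|>2R\}}\bigr|\,d\sigma\leqslant C/R$, uniformly in $m,n\geqslant m_0$ and $(s,\tau)\in D_T$.

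It remains to assemble. Given $\eta>0$, choose $R$ so large that the far-field term is $<\eta$, then $m_1\geqslant m_0$ so large that for $m,n\geqslant m_1$ the near-field term is $<\eta$; hence $\int_s^\tau\bigl|\langle(b_m-b_n)\cdot\nabla u_n,w\rangle\bigr|\,d\sigma<2\eta$ for all $(s,\tau)\in D_T$. Integrating the energy inequality from $s$ to $\tau$ (with $w(s,s,\cdot)=0$, dropping the gradient term) and applying Gronwall's inequality with weight $g$ yields $\sup_{(s,\tau)\in D_T}\|w_{mn}(\tau,s,\cdot)\|_2^2\leqslant C\eta$ for $m,n\geqslant m_1$, so $\{u_m\}$ is Cauchy in $L^\infty(D_T,L^2(\mathbb R^d))$. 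Finally, since $\beta<1$ we may take $p_0=2$ in Corollary \ref{lem3} (as $2>\frac{2}{2-\sqrt\beta}$), and because $D_T$ has finite time-length, $\sup_{0\leqslant s\leqslant T}\|u_m-u_n\|_{L^{2\lambda}([s,T],L^2(\mathbb R^d))}\leqslant T^{1/(2\lambda)}\|u_m-u_n\|_{L^\infty(D_T,L^2)}\to 0$ as $m,n\to\infty$; Corollary \ref{lem3} then gives $\|u_m-u_n\|_{L^\infty(D_T\times\mathbb R^d)}\to 0$. For $r\geqslant 2$ the elementary bound $\|h\|_r\leqslant\|h\|_2^{2/r}\|h\|_\infty^{1-2/r}$ implies $\|u_m-u_n\|_{L^\infty(D_T,L^r(\mathbb R^d))}\leqslant\|u_m-u_n\|_{L^\infty(D_T,L^2)}^{2/r}\,\|u_m-u_n\|_{L^\infty(D_T\times\mathbb R^d)}^{1-2/r}\to 0$, which shows $\{u_m\}$ is fundamental in $L^\infty(D_T,L^r(\mathbb R^d))$ for every $r\geqslant 2$.
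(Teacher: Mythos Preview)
Your $L^2$ argument is correct and closely mirrors the paper's proof: both use a near/far decomposition, prove uniform tightness of $\{u_m\}$ and $\{\nabla u_m\}$ via a weighted energy estimate with a cutoff (you take a compactly supported $\phi_R$; the paper takes $1-\rho_\delta$ with $\rho_\delta=(1+\delta|x|^2)^{-k}$), and control the near-field forcing term through $(\mathbf C_1)$ together with the uniform $L^\infty$ bound on $w$. Your Gronwall-with-weight-$g$ device plays exactly the role of the paper's substitution $v_m=u_m\,e^{h(t)-h(s)}$ with $h'\sim -g$; these are equivalent ways of absorbing the $g$-term.

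Where you diverge is in the lift from $r=2$ to $r>2$, and there you import an assumption not present in the lemma. Corollary~\ref{lem3} is stated ``in the assumptions of Theorem~\ref{mainthm}'', i.e.\ it requires $\beta<\frac{4}{d^2}\omega_d^2$, which is strictly smaller than~$1$; under the bare hypothesis $\beta<1$ of Lemma~\ref{lem1} you are not entitled to invoke it. The paper's lift (its Claim~\ref{claim_L2_Lp}) uses only H\"older and the uniform $L^p$ bound~\eqref{est_Lp}:
\[
\|v_m-v_n\|_q^q \leqslant \bigl(\sup_{(s,t)\in D_T}\|v_m-v_n\|_{2(q-1)}^{2(q-1)}\bigr)^{1/2}\bigl(\sup_{(s,t)\in D_T}\|v_m-v_n\|_2^2\bigr)^{1/2},
\]
with the first factor bounded uniformly in $m,n$ because $2(q-1)>(1-\sqrt{\beta/4})^{-1}$ whenever $q>2$ and $\beta<1$. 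This is both simpler and sufficient under the stated hypothesis. Your route is fine for the eventual application to Theorem~\ref{mainthm}, but does not establish Lemma~\ref{lem1} in the generality claimed.
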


\medskip

Let us prove the convergence in \eqref{limU}. Fix $f \in C_c^\infty(\mathbb R^d)$,
and choose  $r=2$ in Lemma \ref{lem1}. Then $r>\frac{2}{2-\sqrt{\beta}}$ since $\beta$ is less than $1$, and we can
take $p_0:=r$ in Lemma \ref{lem3}. Now,
Lemma \ref{lem3} and Lemma \ref{lem1} imply that there exists $h>0$ such that the sequence $\{U_mf\}$ is fundamental in $L^\infty(D_{T,\,h}, C_\infty(\mathbb R^d))$. By the reproduction property, $\{U_mf\}$ is fundamental in 
$L^\infty(D_T, C_\infty(\mathbb R^d))$.
The convergence in \eqref{limU} follows.

 The proof of Theorem \ref{mainthm} is completed. 

\begin{remark}
Note that the constraint on $\beta$ in Theorem \ref{mainthm}  (in addition to $\beta<1$)  comes solely from Lemma \ref{thmapr}.
\end{remark}

\section{Proofs of Lemmas \ref{thmapr} -- \ref{lem1} and Proposition \ref{lem0}}
\label{proof_sect}

\subsection*{Preliminaries}

\textbf{1.~}We will use the following well known fact (which we use below for $u_m$). 
Suppose that $b$ belongs to $\mathbf{F}_{\beta,\,\mathcal P}$ with $\beta<1$. If
$p>(1-\sqrt{\beta/4})^{-1}$, $f \in L^p(\mathbb R^d)$,
then the (unique) weak solution $u$ of the equation \eqref{cauchy} such that
$$
\lim_{t \rightarrow +0} \langle u(t), \xi \rangle=\langle f,\xi\rangle
$$
for all $\xi \in L^{p'}(\mathbb R^d)$ having compact support, $\frac{1}{p}+\frac{1}{p'}=1$, satisfies
\begin{equation}
\label{est_Lp}
\sup_{t \in [0,\tau]}  \|u(t)\|_p^p + C_1\int_0^\tau \langle (\nabla (u |u|^{\frac{p}{2}-1}))^2\rangle dt 
\leqslant C_2\|f\|^p_p,
\end{equation}
where $0<C_i=C_i(\beta,g,p)<\infty$, $i=1,2$ (see Appendix A for the proof for $u_m$ which, in turn, is sufficient to conclude \eqref{est_Lp} for $u$ as above).

\smallskip

\textbf{2.~}Let $g$ be the function from the condition \eqref{bc}. Set 
$$
G(h):=\sup_{0 \leqslant s \leqslant T-h}\int_s^{s+h} g(t)dt.
$$
Clearly, $G(h)=o(h)$ (i.e.~$G(h) \rightarrow 0$ as $h\rightarrow 0$).

\smallskip

\subsection*{Proof of Lemma \ref{thmapr}}

It suffices to prove Lemma \ref{thmapr} for $s \leqslant \tau \leqslant s + h$, for a small $h$, uniformly in $s$.

We consider smooth approximating vector fields $b_m:=\eta_m \ast \mathbf{1}_m b$, not just truncations $\mathbf{1}_mb$ of $b$ (cf.~the beginning of Section \ref{mainthmproof_sect}), because the intermediate  calculations below involve third order derivatives of $u$.

In what follows, we omit index $m$ where possible:
$
u(t):=u_m(t)~\left(=U_m(t,s)f, t \geqslant s \right).
$
Denote $w=\nabla u$, $w_r=\frac{\partial}{\partial x_r}u$, $1 \leqslant r \leqslant d$.
Define
$$
\varphi_r:=-\frac{\partial}{\partial x_r}\left(w_r|w|^{q-2} \right), \quad 1 \leqslant r \leqslant d,
$$
$$
I_q = \int_s^\tau \left\langle |w|^{q-2} \sum_{r=1}^d \left|\nabla w_r\right|^2\right\rangle dt \geqslant 0, \quad J_q = \int_s^\tau \langle |w|^{q-2}|\nabla |w||^2 \rangle dt \geqslant 0.
$$
Now, we are going to `differentiate the equation without differentiating its coefficients'. That is, we multiply the  equation in (\ref{cauchy}) by the `test function' $\varphi_r$, integrate in $t$ and $x$, and then sum over $r$ to get
\begin{equation*}
S:=\sum_{r=1}^d \int_s^\tau \left\langle \varphi_r, \frac{\partial u}{\partial t} \right \rangle dt  = \sum_{r=1}^d \int_s^\tau \langle \varphi_r, \Delta u \rangle dt - \sum_{r=1}^d\int_s^\tau \langle \varphi_r, b_m \cdot w\rangle dt =:S_1+S_2.
\end{equation*}
We can re-write
$$
S=\frac{1}{q}\int_s^\tau \frac{\partial}{\partial t} \left\langle |w|^{q}\right\rangle dt = \frac{1}{q} \langle |w(\tau)|^{q} \rangle - \frac{1}{q} \langle |\nabla f|^{q} \rangle
$$
(the fact that $w(s)=\nabla f$ follows by differentiating in $x_i$, for each $1 \leq i \leq d$, the equation in (\ref{cauchy}) and the initial function $f$, solving the resulting Cauchy problem, and then integrating its solution in $x_i$ to see that it is indeed the derivative of $v$ in $x_i$). 
Further, 
\begin{align*}
S_1&=-\sum_{r=1}^d \int_s^\tau \left\langle\frac{\partial}{\partial x_r}\left(w_r|w|^{q-2}\right),\Delta u \right\rangle dt = -\sum_{r=1}^d \int_s^\tau \left\langle \nabla \left(w_r|w|^{q-2} \right), \nabla w_r\right\rangle dt \\
&=-\int_s^\tau \left\langle |w|^{q-2} \sum_{r=1}^d \left|\nabla w_r\right|^2\right\rangle dt - \frac{1}{2}\int_s^\tau \langle \nabla |w|^{q-2},\nabla |w|^2 \rangle dt =- I_q - (q-2)J_q.
\end{align*}
Next, 
\begin{equation*}
S_2 = \int_s^\tau \langle |w|^{q-2} \Delta u, b_m \cdot w \rangle dt +
 \int_s^\tau \left \langle w \cdot \nabla |w|^{q-2}, b_m \cdot w\right\rangle dt =: W_1+W_2.
\end{equation*}
Let us estimate $W_1$ and $W_2$ as follows. 
By the inequality $ac \leq \frac{\gamma}{4} a^2 + \frac{1}{\gamma}c^2$ ($\gamma>0$), we have
\begin{align*}
& |W_1| \leqslant \int_s^\tau \langle |w|^{\frac{q-2}{2}}|\Delta u| |w|^{\frac{q-2}{2}}|b_m||w| \rangle dt  \\
& \leqslant \frac{\gamma}{4} \int_s^\tau \langle |w|^{q-2}|\Delta u|^2 \rangle dt + 
\frac{1}{\gamma} \int_s^\tau \left\langle \left( |b_m| |w|^{\frac{q}{2}} \right)^2\right\rangle dt  \\
& \text{ (we use  ($\mathbf{BC}_m$), where we omit $1/m$ in $\beta+1/m$)} \\
& \leqslant \frac{\gamma}{4} \int_s^\tau \langle |w|^{q-2}|\Delta u|^2 \rangle dt + \frac{1}{\gamma} \left [\beta \frac{q^2}{4}J_q + \int_s^\tau g(t) \langle |w|^{q} \rangle \right] 
\end{align*}
In turn, representing $|\Delta u|^2=(\nabla \cdot w)^2$
and integrating by parts twice we obtain:
\begin{multline*}
\int_s^\tau \langle |w|^{q-2}|\Delta u|^2 \rangle dt  = -\int_s^\tau \langle  w \cdot \nabla |w|^{q-2}, \Delta u \rangle dt
+\sum_{r=1}^d \int_s^\tau \left\langle w \cdot \nabla w_r, \nabla_r |w|^{q-2}\right\rangle dt 
+ I_q\\ =:-F+H+I_q,
\end{multline*}
where we estimate, using quadratic estimates of the form $ac \leq \varkappa a^2 + \frac{1}{4\varkappa}c^2$ ($\varkappa>0$),
$$
|F| \leqslant (q-2)\left( \frac{1}{4\varkappa}  \int_s^\tau \langle |w|^{q-2}|\Delta u|^2\rangle dt + \varkappa J_q\right),
\quad |H| \leqslant (q-2)\left( \frac{1}{2} I_q+ \frac{1}{2} J_q\right).
$$

Thus, we obtain
\begin{equation*}
\biggl(1-\frac{q-2}{4\varkappa} \biggr)\int_s^\tau \langle |w|^{q-2}|\Delta u|^2 \rangle dt \leqslant I_q+(q-2)\left(\varkappa J_q + \frac{1}{2} I_q + \frac{1}{2} J_q \right), \quad \varkappa>\frac{q-2}{4},
\end{equation*}
so
$$
|W_1| \leqslant \frac{\gamma}{4}\frac{4\varkappa}{4\varkappa-q+2}\biggl(I_q+(q-2)\left(\varkappa J_q + \frac{1}{2} I_q + \frac{1}{2} J_q \right) \biggr) + \frac{1}{\gamma} \left [\beta \frac{q^2}{4} J_q + \int_s^\tau g(t) \langle |w|^{q} \rangle \right].
$$

Next, using $ac \leq \nu a^2 + \frac{1}{4\nu}c^2$ ($\nu>0$), we obtain
\begin{align*}
& |W_2| \leqslant (q-2) \int_s^\tau \langle |w|^{q-2} |\nabla |w|| |b_m||w| \rangle dt = (q-2) \int_s^\tau \langle |w|^{\frac{q-2}{2}} |\nabla |w|| |b_m||w|^{\frac{q}{2}} \rangle dt \\
& \leqslant 
(q-2) \left[ \nu \int_s^\tau \langle |w|^{q-2} |\nabla |w||^2\rangle dt + 
\frac{1}{4\nu} \int_s^\tau \left\langle \left(|b_m||w|^{\frac{q}{2}}\right)^2 \right\rangle  dt \right] \\ 
  & \text{ (we use  ($\mathbf{BC}_m$))}\\
& \leqslant (q-2) \left[ \nu J_q
+ 
\frac{\beta}{4\nu} \frac{q^2}{4}J_q +
\frac{1}{4\nu} \int_s^\tau g(t)\langle |w|^{q}\rangle dt \right].
\end{align*}
Thus, identity $S=S_1+S_2$ transforms into
\begin{equation*}
\frac{1}{q} \langle |w(\tau)|^q\rangle - \frac{1}{q} \langle |\nabla f|^q\rangle + I_q + (q-2) J_q = W_1+W_2,
\end{equation*}
and, in view of the above estimates on $|W_1|$, $|W_2|$,  implies
\begin{equation}
\label{ooo}
\frac{1}{q} \langle |w(\tau)|^{q}\rangle + N\,I_q + 
M\,J_q \leqslant \\
\frac{1}{q} \langle |\nabla f|^{q} \rangle + 
\left(\frac{q-2}{4\nu}+\frac{1}{\gamma}\right)\int_s^\tau g(t) \langle |w|^{q} \rangle dt,
\end{equation}
where
$$
N:=1 - \frac{\gamma\varkappa}{4\varkappa-q+2}\bigl(1+\frac{1}{2}(q-2)\bigr),
$$
\begin{equation*}
M := \\ q-2  - (q-2)\left(\nu + \frac{\beta}{16 \nu}q^2 \right) - \frac{\beta}{\gamma}  \frac{q^2}{4} - \frac{\gamma\varkappa}{4\varkappa-q+2}(q-2)\left(\varkappa+\frac{1}{2}\right).
\end{equation*}
We fix
$$
\nu:=q\sqrt{\beta}/4, \quad \varkappa := \frac{q-1}{2}, \quad \gamma := \frac{q\sqrt{\beta}}{q-1}.
$$
Since $\sqrt{\beta}<q^{-1}$, we have $N>0$. Then, in view of the
 inequality $I_q \geqslant J_q$, we have
$$
N\,I_q + M\,J_q \geqslant \biggl(q-1-\frac{q\sqrt{\beta}}{2}(2q-3)\biggr)J_q, \quad \text{ where, clearly, }q-1-\frac{q\sqrt{\beta}}{2}(2q-3)>\frac{1}{2}.
$$

Then, applying the Sobolev embedding theorem  to $\frac{q^2}{4}J_q~(=\int_s^\tau \langle |\nabla|w|^{\frac{q}{2}}|^2 \rangle dt$), and recalling that $w=\nabla u$, we obtain from \eqref{ooo}:
\begin{equation*}
\frac{1}{q} \langle |\nabla u(\tau)|^{q}\rangle + \frac{2C_d}{q^2}\|\nabla u\|^q_{L^q([s,\tau],L^{\frac{qd}{d-2}}(\mathbb R^d))} \\ \leqslant 
\frac{1}{q} \langle |\nabla f|^{q} \rangle +
\left(\frac{q-2}{4\nu}+\frac{1}{\gamma}\right)\int_s^\tau g(t) \langle |\nabla u(t)|^{q} \rangle dt,
\end{equation*}
where $C_d>0$ is the constant in the Sobolev embedding theorem. 

Estimating $\int_s^\tau g(t) \langle |\nabla u|^{q} \rangle dt \leqslant G(h)\sup_{t \in [s,\tau]} \langle |\nabla u(t)|^{q}\rangle $, and  selecting $h\, (\geqslant \tau-s)$ sufficiently small, so that $\left(\frac{q-2}{4\nu}+\frac{1}{\gamma}\right)G(h)<\frac{1}{2q}$ (recall that $G(h)=o(h)$, cf.~the beginning of Section \ref{proof_sect}), we obtain
\begin{equation*}
\frac{1}{2} \sup_{t \in [s,\tau]}\langle |\nabla u(t)|^{q}\rangle  + \frac{2C_d}{q}\|\nabla u\|^q_{L^q([s,\tau],L^{\frac{qd}{d-2}}(\mathbb R^d))}  \leqslant 
 \langle |\nabla f|^{q} \rangle,
\end{equation*}
which completes the proof.

\subsection*{Proof of Lemma \ref{lem2}}

Set $r=r_{m,n}:=u_m-u_n$.
Then $r$ satisfies
\begin{equation}
\label{lem2_id}
\partial_t r=\Delta r - b_m(t,x)\cdot\nabla r - \bigl(b_m(t,x)-b_n(t,x)\bigr)\cdot\nabla u_n.
\end{equation}
Set
$\eta:=r|r|^{\frac{p-2}{2}}$. 
We multiply equation \eqref{lem2_id} by $r|r|^{p-2}$ and integrate to obtain the identity 
\begin{multline}
\label{lem2_id2}
\frac{1}{p}\|\eta(\tau)\|_2^2+\frac{4(p-1)}{p^2}\int_{s}^\tau \|\nabla \eta\|_2^2 dt= 
-\frac{2}{p}\int_s^\tau \langle \nabla \eta, b_m \eta \rangle dt -
\int_{s}^\tau \langle \eta|\eta|^{1-\frac{2}{p}},(b_m-b_n)\cdot\nabla u_n\rangle dt
\end{multline}
(note that by definition $\eta(s) \equiv 0$). We estimate the right-hand side of \eqref{lem2_id2}. Using $ac \leqslant \varepsilon a^2+\frac{1}{4\varepsilon }c^2$ ($\varepsilon>0$) and ($\mathbf{BC}_m$), we obtain:
\begin{align*}
\left|\int_{s}^\tau \langle \nabla \eta,b_m\eta \rangle dt\right| & \leqslant 
\varepsilon \int_{s}^\tau \langle (b_m\eta)^2\rangle dt+\frac{1}{4\varepsilon}\int_{s}^\tau \langle |\nabla \eta|^2 \rangle dt \\ 
&\leqslant 
\varepsilon \beta\int_{s}^\tau \langle |\nabla \eta|^2 \rangle dt+\varepsilon\int_{s}^\tau g(t) \langle \eta^2 \rangle dt+\frac{1}{4\varepsilon} \int_{s}^\tau \langle |\nabla \eta|^2 \rangle dt.
\end{align*}
Next, using  $|b_m-b_n| \leqslant |b_m|+|b_n|$, $ac \leqslant \delta a^2+\frac{1}{4\delta }c^2$ ($\delta>0$), and  ($\mathbf{BC}_m$), we find
\begin{align*}
\left| \int_{s}^\tau \langle \eta|\eta|^{1-\frac{2}{p}}, (b_m-b_n)\cdot\nabla u_n \rangle dt \right| & \leqslant \int_{s}^\tau \langle |b_m-b_n||\eta|,|\eta|^{1-\frac{2}{p}}|\nabla u_n|\rangle dt \\
&\leqslant \delta\int_{s}^\tau \langle (b_m\eta)^2 \rangle dt +\delta\int_{s}^\tau \langle (b_n\eta)^2\rangle dt +2\frac{1}{4\delta}\int_{s}^\tau \langle |\eta|^{2-\frac{4}{p}} |\nabla u_n|^2\rangle dt \\ 
& \leqslant 2\delta\left( \beta\int_{s}^\tau \langle |\nabla \eta|^2\rangle dt + \int_{s}^\tau g(t) \langle \eta^2 \rangle dt \right) + 2\frac{1}{4\delta}\int_{s}^\tau \langle |\eta|^{2-\frac{4}{p}}|\nabla u_n|^2\rangle dt.
\end{align*}
Thus, applying the last two estimates in the right-hand side of \eqref{lem2_id2}, we obtain:
\begin{multline*}
\frac{1}{p} \|\eta(\tau)\|_2^2  + \left(\frac{4(p-1)}{p^2}-\frac{2}{p}\left(\varepsilon\beta+\frac{1}{4\varepsilon}\right)-2\beta\delta \right)\int_{s}^\tau \langle |\nabla \eta|^2 \rangle dt \\
\leqslant \frac{1}{2\delta}\int_{s}^\tau \langle |\eta|^{2-\frac{4}{p}}|\nabla u_n|^2\rangle dt + \left(\frac{2}{p}\varepsilon+2\delta \right)\int_{s}^\tau g(t)\langle \eta^2 \rangle dt.
\end{multline*}
Set $$P:=\frac{4(p-1)}{p^2}-\frac{2}{p}\left(\varepsilon\beta+\frac{1}{4\varepsilon}\right)-2\beta\delta \qquad \text{ with } \varepsilon:=\frac{1}{2\sqrt{\beta}}.$$

Estimating $\int_{s}^\tau g(t)\langle \eta^2 \rangle dt \leqslant G(h)\sup_{t \in [s,\tau]}\|\eta(t)\|_{2}^2$, we have:
\begin{equation}
\label{doublestar}
\left(\frac{1}{p}-\biggl(\frac{1}{p\sqrt{\beta}}+2\delta\biggr) G(h) \right) \sup_{t \in [s,\tau]}\|\eta(t)\|_2^2  + P \int_{s}^\tau \langle |\nabla \eta|^2 \rangle dt 
\leqslant \frac{1}{2\delta}\int_{s}^\tau \langle |\eta|^{2-\frac{4}{p}}|\nabla u_n|^2\rangle dt.
\end{equation}
Since $p_0 > \frac{2}{2-\sqrt{\beta}}$, we can fix $k$ so that
$\frac{4(p_0-1)}{p_0^2}-\frac{2}{p_0}\sqrt{\beta} \geqslant \frac{2}{p_0^k}$.
The last inequality remains valid if we replace $p_0$ with any $p> p_0$.
Fix $\delta$ by
$$\delta:=\frac{1}{2\beta}\left(\frac{4(p-1)}{p^2}-\frac{2}{p}\sqrt{\beta} - \frac{1}{p^k}\right) \geqslant \frac{1}{2\beta p^k}.
$$
Then
\begin{equation*}
P = \frac{4(p-1)}{p^2} - \frac{2}{p}\sqrt{\beta} -2\beta \delta 
=\frac{1}{p^k}.
\end{equation*}

In the next Steps 1 and 2 we estimate the left-hand side and the right-hand side of \eqref{doublestar}.

\medskip

Step 1.~Given $0 < \alpha < 1$, we can choose $k>1$ so that for all $n \geqslant m_0$,
\begin{equation}
\label{iii}
\frac{c_0}{p^k}\|r\|_{L^{\frac{p}{1-\alpha}}([s,\tau],L^{\frac{pd}{d-2+2\alpha}}(\mathbb R^d))}^p \leqslant 
\text{the LHS of \eqref{doublestar}}.
\end{equation}
for some constant $c_0<\infty$.

Indeed, applying the Sobolev embedding theorem in the spatial variables, 
we obtain from \eqref{doublestar}:
$$
\left(\frac{1}{p}-\left(\frac{1}{p\sqrt{\beta}}+2\delta \right)G(h) \right)\sup_{t \in [s,\tau]}\|r(t)\|_{p}^p + \frac{C_d}{p^k}\|r\|_{L^p([s,\tau],L^{\frac{pd}{d-2}}(\mathbb R^d))}^{p} \leqslant \text{the LHS of \eqref{doublestar}}.
$$
Since $\delta \leqslant \frac{c}{p}$, $c:=\frac{1}{\beta}(2-\sqrt{\beta})$, we can select $h$ sufficiently small (we use that $G(h)=o(h)$), so that for all $p \geqslant p_0$
\begin{align*}
\frac{1}{p}-\left(\frac{1}{p\sqrt{\beta}}+2\delta \right)G(h) & \geqslant  \\
\frac{1}{p}\left(1-\left(\frac{1}{\sqrt{\beta}}+2c\right)G(h)\right) & \geqslant  \frac{1}{2p}  \\
& \text{(we use that $k>1$)}  \\
& \geqslant  \frac{1}{2p^k}.
\end{align*}
Thus, we have
$$
\frac{1}{2 p^k}\sup_{t \in [s,\tau]}\|r(t)\|_{p}^p + \frac{C_d}{p^k}\|r\|_{L^p([s,\tau],L^{\frac{pd}{d-2}}(\mathbb R^d))}^{p} \leqslant \text{the LHS of \eqref{doublestar}}.
$$
Using first the H\"{o}lder inequality, and then the Young inequality we obtain:
\begin{align*}
\|r\|_{L^{\frac{p}{1-\alpha}}([s,\tau],L^{\frac{pd}{d-2+2\alpha}}(\mathbb R^d))}^p &  \leqslant \|r\|_{L^{\infty}([s,\tau],L^p(\mathbb R^d))}^{\alpha p}   \|r\|_{L^p([s,\tau],L^{\frac{pd}{d-2}}(\mathbb R^d))}^{(1-\alpha)p} \\
& \leqslant \alpha \|r\|_{L^{\infty}([s,\tau],L^p(\mathbb R^d))}^{p} + (1-\alpha) \|r\|_{L^p([s,\tau],L^{\frac{pd}{d-2}}(\mathbb R^d))}^{p},
\end{align*}
which yields \eqref{iii}.

\medskip

Step 2:~With $\sigma$,
$\sigma'$ 
and $\lambda$, $\lambda'$ 
as in the formulation of the lemma,
we have
\begin{equation}
\label{iii_}
\text{the RHS of \eqref{doublestar}} \leqslant 
\beta p^k \|\nabla u_n\|^2_{L^{2\lambda'}([s,\tau],L^{2 \sigma'}(\mathbb R^d))} \|r\|^{p-2}_{L^{(p-2)\lambda}([s,\tau],L^{(p-2)\sigma}(\mathbb R^d))}
\end{equation}
Indeed, since $\delta \geqslant \frac{1}{2\beta p^k}$, the RHS of \eqref{doublestar}
$
=\frac{1}{2\delta}\int_{s}^\tau \langle |\eta|^{2-\frac{4}{p}}|\nabla u_n|^2\rangle dt \leqslant \beta p^k \int_{s}^\tau \langle |\eta|^{2-\frac{4}{p}}|\nabla u_n|^2\rangle dt.
$
In turn,
\begin{align*}
&\int_{s}^\tau \langle |\eta|^{2-\frac{4}{p}}|\nabla u_n|^2\rangle dt \leqslant \int_s^\tau  \langle |\nabla u_n|^{2\sigma'}\rangle^{\frac{1}{\sigma'}} \langle |\eta|^{\left( 2-\frac{4}{p}\right)\sigma}\rangle^{\frac{1}{\sigma}} dt \\
&=\int_s^\tau  \|\nabla u_n\|^2_{L^{2\sigma'}(\mathbb R^d)} \|r\|^{p-2}_{L^{(p-2)\sigma}(\mathbb R^d)}dt \\
&\leqslant \left( \int_s^\tau  \|\nabla u_n\|^{2\lambda'}_{L^{2\sigma'}(\mathbb R^d)} dt \right)^{\frac{1}{\lambda'}} \left(\int_s^\tau \|r\|^{(p-2)\lambda}_{L^{(p-2)\sigma}(\mathbb R^d)} dt \right)^{\frac{1}{\lambda}} \\  
&= \|\nabla u_n\|^2_{L^{2\lambda'}([s,\tau],L^{2 \sigma'}(\mathbb R^d))} \|r\|^{p-2}_{L^{(p-2)\lambda}([s,\tau],L^{(p-2)\sigma}(\mathbb R^d))},
\end{align*}
which yields \eqref{iii_}.

\medskip

Substituting the estimates \eqref{iii} and \eqref{iii_} into \eqref{doublestar}, and taking $\tau:=s+h$, we  arrive at the required estimate \eqref{iterineq}.

\subsection*{Proof of Lemma \ref{lem3}}

The proof of Lemma \ref{lem3} follows closely the proof of \cite[Lemma 7]{KS}. 
Consider the inequality of Lemma \ref{lem2}:
\begin{multline}
\label{iterineq_}
\|u_m-u_n\|_{L^{\frac{p}{1-\alpha}}([s,s+h],L^{\frac{pd}{d-2+2\alpha}}(\mathbb R^d))} \\ \leqslant \left(C_0 \beta \|\nabla u_m\|^2_{L^{2\lambda'}([s,s+h],L^{2\sigma'}(\mathbb R^d))}\right)^{\frac{1}{p}}(p^{2k})^{\frac{1}{p}}\|u_m-u_n\|_{L^{(p-2)\lambda}([s,s+h],L^{(p-2)\sigma}(\mathbb R^d))}^{1-\frac{2}{p}},
\end{multline}
where $\lambda$ is defined by
$\frac{1/(1-\alpha)}{\lambda}=\frac{d/(d-2+2\alpha)}{\sigma}$, and $\frac{1}{\lambda}+\frac{1}{\lambda'}=1$ (it is easy to see that $\lambda'=\frac{\sigma'(d-2+2\alpha)}{d(1-\alpha)}$).
We fix $\alpha:=\frac{2}{d+2}$
(we keep $\alpha$ to make the calculations easier to follow) and $1<\sigma<\frac{d}{d-2+2\alpha}$ so that $\sigma'>\frac{d}{2(1-\alpha)}$, determined from $\frac{1}{\sigma}+\frac{1}{\sigma'}=1$, is sufficiently close to $\frac{d}{2(1-\alpha)}$. 
\smallskip
We apply the a priori estimate of Lemma \ref{thmapr}:
\begin{align*}
&\|\nabla u_m\|^2_{L^{2\lambda'}([s,s+h],L^{2\sigma'}(\mathbb R^d))} \\
& \text{(we use the H\"{o}lder inequality)} \\
& \leqslant \|\nabla u_m\|^{\alpha}_{L^\infty([s,s+h],L^q(\mathbb R^d))} \|\nabla u_m\|^{1-\alpha}_{L^q([s,s+h],L^{\frac{qd}{d-2}}(\mathbb R^d))} \\
& \text{(we use Young's inequality)}\\
& \leqslant \alpha \|\nabla u_m\|_{L^\infty([s,s+h],L^q(\mathbb R^d))} + (1-\alpha) \|\nabla u_m\|_{L^q([s,s+h],L^{\frac{qd}{d-2}}(\mathbb R^d))} \\
& \text{(we use Lemma \ref{thmapr})} \\ 
& \leqslant C\|\nabla f\|_q=:D<\infty,
\end{align*}
where $q$ is determined from $\sigma'=\frac{1}{2}\frac{qd}{d-2+2\alpha}$ (such $q$ ($\in (d,1/\sqrt{\beta})$) in Lemma \ref{thmapr} is admissible, in view of the assumptions on $\beta$ in Theorem \ref{mainthm}).
Then \eqref{iterineq_} yields
\begin{equation}
\label{iterineq2}
\|u_m-u_n\|_{L^{\frac{p}{1-\alpha}}([s,s+h],L^{\frac{pd}{d-2+2\alpha}}(\mathbb R^d))} \leqslant D^{\frac{1}{p}}(p^{2k})^{\frac{1}{p}}\|u_m-u_n\|_{L^{(p-2)\lambda}([s,s+h],L^{(p-2)\sigma}(\mathbb R^d))}^{1-\frac{2}{p}}.
\end{equation}

In order to iterate the inequality \eqref{iterineq2},
choose any $p_0>\frac{2}{2-\sqrt{\beta}}$ and construct a sequence $\{p_l\}_{l \geqslant 0}$ by successively assuming $\sigma(p_1-2)=p_0$,
$\sigma(p_2-2)=\frac{p_1d}{d-2+2\alpha}$, $\sigma(p_3-2)=\frac{p_2d}{d-2+2\alpha}$ etc, so that 
\begin{equation}
\label{p_l}
p_l=(a-1)^{-1}\left(a^l\left(\frac{p_0}{\sigma}+2\right)-a^{l-1}\frac{p_0}{\sigma}-2\right), \quad a:=\frac{1}{\sigma}\frac{d}{d-2+2\alpha}>1.
\end{equation}
Clearly, 
\begin{equation}
\label{p_l2}
c_1 a^l \leqslant p_l \leqslant c_2 a^l, \quad \text{ where } c_1:=p_1 a^{-1}, \quad c_2:=c_1 (a-1)^{-1},
\end{equation}
and so 
$p_l \rightarrow \infty$ as $l \rightarrow \infty$.

Now, we iterate inequality \eqref{iterineq2}, starting with $p=p_0$, to obtain
\begin{equation}
\label{iterineq4}
\|u_m-u_n\|_{L^{\frac{p_l}{1-\alpha}}([s,s+h],L^{\frac{p_ld}{d-2+2\alpha}}(\mathbb R^d))} \leqslant D^{\alpha_l }\Gamma_l\|u_m-u_n\|_{L^{p_0\lambda}([s,s+h],L^{p_0\sigma}(\mathbb R^d))}^{\gamma_l}, 
\end{equation}
where $$\gamma_l:=\left(1-\frac{2}{p_1} \right) \dots \left(1-\frac{2}{p_l} \right),$$
\begin{multline*}
\alpha_l:=\frac{1}{p_1}\biggl(1-\frac{2}{p_2} \biggr)\biggl(1-\frac{2}{p_3} \biggr) \dots \biggl(1-\frac{2}{p_l} \biggr) + \\ 
\frac{1}{p_2}\biggl(1-\frac{2}{p_3} \biggr)\biggl(1-\frac{2}{p_4} \biggr) \dots \biggl(1-\frac{2}{p_l} \biggr) + \dots +
\frac{1}{p_{l-1}}\biggl(1-\frac{2}{p_l} \biggr) + \frac{1}{p_{l}},
\end{multline*}
$$
\Gamma_l:=\biggl(p_l^{p_l^{-1}}p_{l-1}^{p_{l-1}^{-1} (1-2p_l^{-1})} p_{l-2}^{p_{l-2}^{-1}(1-2p_{l-1}^{-1})(1-2p_l^{-1})} \dots p_1^{p_1^{-1}(1-2p_2^{-1}) \dots (1-2p_l^{-1})} \biggr)^{2k}.
$$
We wish to take $l \rightarrow \infty$ in \eqref{iterineq4}: since $p_l \rightarrow \infty$ as $l \rightarrow \infty$, this would yield the required inequality \eqref{iterineq0} provided  that sequences $\{\alpha_l\}$, $\{\Gamma_l\}$ are bounded from above, and $\{\gamma_l\}$ is bounded from below by a positive constant.
Note that
$
\alpha_l=a^l-\frac{1}{p_l(a-1)}$, $\gamma_l=p_0 \frac{a^{l-1}}{\sigma p_l}.
$
In view of \eqref{p_l}, 
\begin{equation}
\label{need1}
\sup_l \alpha_l \leqslant \biggl(\frac{p_0}{\sigma}+2-\frac{p_0(d-2+2\alpha)}{d}\biggr)^{-1}<\infty, \quad \sup_l \gamma_l<\infty, 
\end{equation}
\begin{equation}
\label{need2}
\inf_l \gamma_l> \biggl( 1-\frac{\sigma(d-2+2\alpha)}{d}\biggr)\biggl(1-\frac{\sigma(d-2+2\alpha)}{d}+\frac{2\sigma}{p_0} \biggr)^{-1}>0.
\end{equation}
Further, noticing that (cf.~\eqref{p_l}) $\Gamma_l^{1/2k}=p_l^{p_l^{-1}}p_{l-1}^{a p_l^{-1}} p_{l-2}^{a^2 p_l^{-1}} \dots p_1^{a^{l-1}p_l^{-1}}$, we have by \eqref{p_l2}
\begin{multline}
\label{need3}
\Gamma_l^{1/2k} \leqslant (c_1 a^l)^{(c_2a^l)^{-1}} (c_1 a^{l-1})^{(c_2a^{l-1})^{-1}} \dots (c_1 a)^{(c_2 a)^{-1}}= \\ \biggl(c_1^{(a^l-1)/(a^l(a-1))} a^{\sum_{j=1}^l j a^{-j}} \biggr)^{c_2^{-1}} \leqslant 
\biggl(c_1^{(a-1)^{-1}} c_2^{a(a-1)^{-1}} \biggr)^{c_2^{-1}}<\infty.
\end{multline}
Now, estimates \eqref{need1}, \eqref{need2} and \eqref{need3} imply that we can take $l \rightarrow \infty$ in \eqref{iterineq2}:
\begin{equation*}
\|u_m-u_n\|_{L^\infty([s,s+h],L^{\infty}(\mathbb R^d))} \leqslant B\|u_m-u_n\|_{L^{p_0}([s,s+h],L^{p_0}(\mathbb R^d))}^{\gamma}.
\end{equation*}
Taking $\sup$ in $0 \leqslant s \leqslant T-h$ in both sides of the inequality, we obtain \eqref{iterineq0} in Lemma \ref{lem3}.

\begin{remark}
The main concern of the iterative procedure has been to keep $\inf_l \gamma_l>0$: if $\gamma_l \downarrow 0$, then the result of the iterations ($\|U_mf-U_nf\|_{L^\infty(D_T \times \mathbb R^d)} \leqslant C$) would be useless for the purpose of proving Theorem \ref{mainthm}.
\end{remark}

\subsection*{Proof of Lemma \ref{lem1}}

By the reproduction property, 
and in view of \eqref{est_Lp},
it suffices to show that  $\{U_mf\}$ is fundamental in $L^\infty(D_{T,\,h}, L^2(\mathbb R^d))$ for some $h>0$. We show this in three steps:

\smallskip

\textbf{Step 1.} 
Define $$\rho_{\delta}(x):=(1+\delta |x|^2)^{-\frac{1}{2}}, \quad \delta>0,  \quad x \in \mathbb R^d.$$

In Step 1, we are going to show that there is an $h=h(g)>0$ ($g$ is from the condition ($\mathbf{BC}_m$)) such that for any $\varepsilon>0$ there is a $0<\delta<1$ such that
\begin{equation}
\label{reqineq1}
\|(1-\rho_\delta)^{\frac{1}{2}}\,U_mf\|_{L^\infty(D_{T,\,h},L^2(\mathbb R^d))}<\varepsilon \quad \text{ for all }m.
\end{equation}

Indeed, set $u_m(t)=U_m(t,s)f$ ($t \geqslant s$). Set
$$
J:=\int_s^\tau \langle (1-\rho_\delta) (\nabla u_m)^2 \rangle dt.
$$
We multiply the equation in \eqref{cauchy} by $(1-\rho_\delta)u_m$ and integrate by parts to get
\begin{equation}
\label{main_id}
\langle (1-\rho_\delta) u_m^{2}(\tau)\rangle - \langle (1-\rho_\delta)f^{2}\rangle + 2 J
= \int_s^\tau \langle u_m^2, (-\Delta \rho_\delta) \rangle dt - 2\int_s^\tau \langle (1-\rho_\delta)u_{m} b_m, \nabla u_m \rangle dt.
\end{equation}
Estimating the last term by applying the inequality $2ac \leqslant \gamma a^2 + \frac{1}{\gamma}c^2$ ($\gamma>0$) and the condition ($\mathbf{BC}_m$), we get:
\begin{align*}
& - 2\int_s^\tau \langle (1-\rho_\delta)u_{m} b_m, \nabla u_m \rangle dt  \\
& \leqslant
\gamma J
+\frac{1}{\gamma}
\int_s^\tau \langle(1-\rho_\delta) b_m^2 u_m^2  \rangle dt \\
& \leqslant 
\gamma J
+ \frac{\beta}{\gamma} \int_s^\tau \langle (\nabla (u_m \sqrt{1-\rho_\delta}))^2 \rangle dt
+ \frac{1}{\gamma} \int_s^\tau \langle g(t)(1-\rho_\delta)  u_m^{2} \rangle dt.
\end{align*}
We compute:
\begin{align*}
& \int_s^\tau \langle (\nabla (u_m \sqrt{1-\rho_\delta}))^2 \rangle dt \\
& = J + \int_s^\tau \langle u^2 (\nabla \sqrt{1-\rho_\delta})^2 \rangle dt + \frac{1}{2} \int_s^\tau \langle u^2, (-\Delta \rho_\delta) \rangle dt \\ 
& = J + \int_s^\tau \left\langle u^2, \frac{\delta^2 x^2 \rho^6}{4(1-\rho)}\right\rangle dt + \int_s^\tau \left\langle u^2, \frac{\rho^3\delta}{2}(d - 3\rho^2 \delta x^2)\right\rangle dt.
\end{align*}
Thus, estimating $\int_s^\tau \langle g(t)(1-\rho_\delta)  u_m^{2} \rangle dt \leqslant G(h) \sup_{t \in [s,\tau]}\langle (1-\rho_\delta) u_m^{2}(t)\rangle$, we obtain from \eqref{main_id}:
\begin{align*}
&\left(1-\frac{G(h)}{\gamma} \right)\sup_{t \in [s,\tau]}\langle (1-\rho_\delta) u_m^{2}(t)\rangle + \left(2-\gamma-\frac{\beta}{\gamma}  \right)J \\
&\leqslant \langle (1-\rho_\delta)f^{2}\rangle + \frac{\beta}{\gamma}\int_s^\tau \left\langle u^2, \frac{\delta^2 x^2 \rho^6}{4(1-\rho)}\right\rangle dt + \left(1-\frac{\beta}{\gamma}\right) \int_s^\tau \left\langle u^2, \frac{\rho^3\delta}{2}(d - 3\rho^2 \delta x^2)\right\rangle dt.
\end{align*}
Now, fix $\gamma>0$ by the condition $2-\gamma-\frac{\beta}{\gamma}>0$,
and then fix $h$ by the condition $1-\frac{1}{\gamma}G(h)>0$ (recall that $G(h)=o(h)$). Noting that 
$\frac{\delta^2 x^2 \rho^6(x)}{4(1-\rho(x))} \leqslant \frac{\delta}{2} \rho(x)$, $\frac{\rho^3(x)\delta}{2}\bigl(d - 3\rho^2(x) \delta x^2\bigr) \leqslant \delta \frac{d-3}{2}\rho(x)$,  $\int_s^\tau \langle \rho_\delta u^2 \rangle dt  \leqslant h C \|f\|_2^2$ (by \eqref{est_Lp} with $p=2$), we obtain:
\begin{equation*}
\left(1-\frac{G(h)}{\gamma} \right)\sup_{t \in [s,\tau]}\langle (1-\rho_\delta) u_m^{2}(t)\rangle  
\leqslant \langle (1-\rho_\delta)f^{2}\rangle + \delta h C \left( \frac{\beta}{2\gamma} + \left(1-\frac{\beta}{\gamma}\right)\frac{d-3}{2}\right)\|f\|_2^2.
\end{equation*}
Since $\rho_{\delta} \rightarrow 1$ uniformly on the support of $f \in C_c^\infty(\mathbb R^d)$ as $\delta \rightarrow 0$, the right-hand side of the inequality can be made
arbitrarily small by taking  sufficiently small $\delta$, i.e.~we have proved \eqref{reqineq1}.

\medskip

\textbf{Step 2.}~
In Step 2, we are going to show that there is an $h=h(g)>0$ such that for a given $\varepsilon>0$ and $\delta:=\delta(\varepsilon)$ from Step 1 
there is a $n_0=n_0(\varepsilon)$ such that
\begin{equation}
\label{omega_3}
\bigl\| \rho_\delta^{\frac{1}{2}} (U_mf - U_nf)\bigr\|_{L^\infty(D_{T,\,h},L^2(\mathbb R^d))} < \varepsilon \quad \text{ for all } 
m,n \geqslant n_0.
\end{equation}

Indeed, by the equation for $r(t):=u_m(t)-u_n(t)~(=U_m(t,s)f-U_n(t,s)f)$,
\begin{equation*}
\int_s^\tau \left\langle \rho_\delta r \frac{\partial r}{\partial t}\right\rangle dt + \int_s^\tau \langle \rho_\delta r (-\Delta r) \rangle dt =- \int_s^\tau \langle \rho_\delta r,  b_m \cdot  \nabla r\rangle dt - \int_s^\tau \langle \rho_\delta r, (b_m-b_n) \cdot \nabla u_n\rangle dt.
\end{equation*}
Integrating by parts in the second term in the left-hand side, and applying the inequality $ac \leqslant \frac{1}{2} a^2 +\frac{1}{2} c^2$ to the first term in the right-hand side,
we obtain:
\begin{equation*}
\langle \rho_\delta r^2(\tau)\rangle   + \int_s^\tau \langle \rho_\delta (\nabla r)^2 \rangle dt   + 2\int_s^\tau \langle r\nabla \rho_\delta, \nabla r\rangle dt \\ \leqslant \int_s^\tau \langle \rho_\delta b_m^2 r^2\rangle dt -
2\int_s^\tau \langle \rho_\delta r, (b_m-b_n) \cdot \nabla u_n\rangle dt
\end{equation*}
or 
$$
\langle \rho_\delta r^2(\tau)\rangle + \int_s^\tau \langle \rho_\delta (\nabla r)^2 \rangle dt + K \leqslant L + Z.
$$

We have
$$
K=\int_s^\tau \langle \nabla \rho_\delta, \nabla r^2 \rangle dt = \int_s^\tau \langle (-\Delta \rho_\delta) r^2 \rangle dt = \int_s^\tau \langle \left(\delta d \rho_\delta^3 - 3\delta^2 |x|^2 \rho_\delta^5  \right) r^2 \rangle  dt \geqslant 0.
$$

Next, using ($\mathbf{BC}_m$)
we obtain
\begin{align*}
& L = \int_s^\tau \langle \rho_\delta b_m^2 r^2\rangle dt \\
& \leqslant \beta\int_s^\tau \langle (\nabla(\sqrt{\rho_\delta} r))^2\rangle dt +\int_s^\tau g(t) \langle \rho_\delta r^2 \rangle dt \\ 
& \left(\text{here we use $\frac{(\nabla \rho_\delta(x))^2}{\rho_\delta(x)}=\delta^2 |x|^2 \rho^5$}\right) \\
&=
\frac{\beta}{4} \int_s^\tau  \langle \delta ^2 |x|^2 \rho_\delta^5 r^2 \rangle dt + \frac{\beta}{2} K + \beta \int_s^\tau \langle \rho_\delta (\nabla r)^2 \rangle dt + \int_s^\tau g(t) \langle \rho_\delta r^2 \rangle dt.
\end{align*}

Now we combine the above bound on $L$ and the estimates $$\int_s^\tau g(t) \langle \rho_\delta r^2 \rangle dt \leqslant G(h) \sup_{t \in [s,\tau]}\langle \rho_\delta r^2(t)\rangle, \quad \int_s^\tau  \langle \delta ^2 |x|^2 \rho_\delta^5 r^2 \rangle dt \leqslant h \,\delta \sup_{t \in [s,\tau]}\langle \rho_\delta r^2(t)\rangle,$$ obtaining:
\begin{equation}
\label{K}
\left(1-G(h)-\frac{\beta \delta h}{4} \right)\sup_{t \in [s,\tau]}\langle \rho_\delta r^2(t)\rangle + \left(1-\beta\right)\int_s^\tau \langle \rho_\delta (\nabla r)^2 \rangle dt + \left(1-\frac{\beta}{2}\right) K \leqslant  Z.
\end{equation}
Fix $h>0$ by the condition $1-G(h)-\frac{\beta \delta h}{4} \geqslant \frac{1}{2}$ (recall that $G(h)=o(h)$, $\beta$, $\delta<1$).

Finally, we estimate the term $Z$ as follows:
\begin{align*}
&Z =-2\int_s^\tau \langle \rho_\delta r (b_m-b_n), \nabla u_n\rangle dt  \\
& \leqslant  \varepsilon \int_s^\tau (\nabla u_n)^2 dt + \frac{1}{\varepsilon}\int_s^\tau \langle \rho_\delta^2 r^2 (b_m-b_n)^2\rangle dt \\
& \text{(here we use $\int_s^\tau (\nabla u_n)^2 dt \leqslant C\|f\|_2^2$, see Appendix A with $p=2$)} \\
& \leqslant  
\varepsilon C \|f\|^2_2 + \frac{1}{\varepsilon}\int_s^\tau \langle \rho_\delta^2 r^2 (b_m-b_n)^2\rangle dt, 
 \\
 & \leqslant \varepsilon C \|f\|^2_2 + \frac{1}{\varepsilon}\int_s^\tau \langle (1-\mathbf{1}_{B(0,R)}) \rho_\delta^2 r^2 (b_m-b_n)^2\rangle dt + \frac{1}{\varepsilon}\int_s^\tau \langle \mathbf{1}_{B(0,R)} \rho_\delta^2 r^2 (b_m-b_n)^2\rangle dt \\
& =:\varepsilon C \|f\|^2_2 + \frac{1}{\varepsilon} Z_{1} + \frac{1}{\varepsilon} Z_{2}.
\end{align*}
In turn,
$$
Z_{1} \leqslant 2(1+\delta R^2)^{-\frac{1}{2}} \left( \int_s^\tau \langle \rho_\delta b_m^2 r^2\rangle dt + \int_s^\tau \langle \rho_\delta b_n^2 r^2\rangle dt \right).
$$
Estimating the terms in the brackets in the last inequality in the same way as $L$, and substituting the resulting estimate on $Z$ into \eqref{K}, we obtain:
\begin{multline*}
\left(1-G(h)-\frac{\beta \delta h}{4}-\frac{1}{\varepsilon}(1+\delta R^2)^{-\frac{1}{2}}C_1\right)\sup_{t \in [s,\tau]}\langle \rho_\delta r^2(t)\rangle \\ 
+\left(1-\beta-\frac{4\beta}{\varepsilon}(1+\delta R^2)^{-\frac{1}{2}}\right)\int_s^\tau \langle \rho_\delta (\nabla r)^2 \rangle dt + \left(1-\frac{\beta}{2}-\frac{2\beta}{\varepsilon}(1+\delta R^2)^{-\frac{1}{2}}\right) K
\leqslant  \varepsilon C \|f\|^2_2 + \frac{1}{\varepsilon}Z_{2},
\end{multline*}
where $C_1:=4\left(G(h) + \frac{\beta \delta h}{4} \right)$.

Choose $R=R(\varepsilon,\delta)>0$ sufficiently large to ensure that the coefficients of $\int_s^\tau \langle \rho_\delta (\nabla r)^2 \rangle dt$, $K$ remain positive and, moreover, the coefficient of $\sup_{t \in [s,\tau]}\langle \rho_\delta r^2(t)\rangle$ is greater or equal to $\frac{1}{4}$ (since $1-G(h)-\frac{\beta \delta h}{4} \geqslant \frac{1}{2}$). Then the previous inequality yields
\begin{equation}
\label{K2}
\frac{1}{4}\sup_{t \in [s,\tau]}\langle \rho_\delta r^2(t)\rangle \leqslant \varepsilon C \|f\|^2_2 + \frac{1}{\varepsilon}Z_{2}.
\end{equation}
Since $U_m$ is $L^\infty$-contractive, $\|r(\tau)\|_\infty \leqslant 2\|f\|_{\infty}$ 
and so there is a $n_0=n_0(R,\varepsilon)$ such that
\begin{align*}
Z_{2}&=\int_s^\tau \langle \mathbf{1}_{B(0,R)} \rho_\delta^2 r^2 (b_m-b_n)^2\rangle dt \\
& \leqslant 4\|f\|^2_{\infty}  \int_s^\tau \langle \mathbf{1}_{B(0,R)} (b_m-b_n)^2\rangle dt < \varepsilon^2
\end{align*}
for all $(s,\tau) \in D_{T,\,h}$ for all $m,n \geqslant n_0$
since $b_m \rightarrow b$ in $L^2_{\loc}([s,s+h] \times \mathbb R^d,\mathbb R^d)$.

Thus, in view of \eqref{K2}
$$
\sup_{t \in [s,\tau]}\langle \rho_\delta r^2(t)\rangle < 4(C\|f\|^2_2+1)\,\varepsilon.
$$
Therefore, we have proved \eqref{omega_3}.

\smallskip

\textbf{Step 3.~}Set $\|\cdot\|:=\|\cdot\|_{L^\infty(D_{T,\,h},L^2(\mathbb R^d))}$. The results of Step 1 and Step 2 yield: for any $\varepsilon>0$ there is a $\delta=\delta(\varepsilon)<1$, and an $n_0=n_{0}(\varepsilon)$ such that
\begin{align*}
&\|U_mf-U_nf\|^2 
= \|(1-\rho_\delta)^{\frac{1}{2}}(U_mf-U_nf)\|^2 + \|\rho_\delta^{\frac{1}{2}}(U_mf-U_nf)\|^2 \\
& \leqslant 2\|(1-\rho_\delta)^{\frac{1}{2}}U_mf \|^2 +
2\|(1-\rho_\delta)^{\frac{1}{2}}U_nf \|^2  + \|\rho_\delta^{\frac{1}{2}}(U_mf-U_nf)\|^2 < 5\varepsilon
\end{align*}
for all $m,n \geqslant n_0$. 

The latter implies that
$\{U_mf\}$ is fundamental  in $L^\infty(D_{T,\,h},L^2(\mathbb R^d))$, as required.

\subsection*{Proof of Proposition \ref{lem0}}

In Section \ref{itersect} we proved the existence of $U f:={\scriptstyle L^\infty(D_T \times \mathbb R^d)}{\text{\rm-}}\lim_{m \rightarrow \infty} U_m f$, $f \in C_c^\infty(\mathbb R^d)$.
Since $C_c^\infty(\mathbb R^d)$ in dense in $C_\infty(\mathbb R^d)$, and $U_m$ is $L^\infty$-contractive,
$U$ extends by continuity to $C_\infty(\mathbb R^d)$.  Thus,
the property (\textbf{E2}) is established.

The properties (\textbf{E1}) and (\textbf{E3})
follow from \eqref{limU} and the analogous properties 
of $U_m$.

We are left to prove (\textbf{E4}). Set $u(t)=U(t,0)f$ ($t \geqslant 0$), $f \in C_\infty(\mathbb R^d)$. 
In order to verify that $u$ is a weak solution of \eqref{cauchy},
we have to show that $b \cdot \nabla u \in L_{\loc}^1((0,\infty)\times \mathbb R^d)$. Since $b \in L^{2}_{\loc}([0,\infty) \times \mathbb R^d, \mathbb R^d)$, it suffices to show that $\nabla u \in L^{2}_{\loc}((0,\infty) \times \mathbb R^d, \mathbb R^d)$.
Fix $k>\frac{d}{2}$. Set $$\theta_{\delta}(x):=(1+\delta |x|^2)^{-k}, \quad \delta >0,  \quad x \in \mathbb R^d.$$
It is easy to see that $\theta_{\delta} \in L^1(\mathbb R^d)$.

Set $u_m(t)=U_m(t,0)f$ ($t \geqslant 0$). 

\begin{claim}
\label{dontrun2}
There exist an $h>0$ and a $\delta>0$ such that for all $m$
\begin{equation}
\label{L2_3}
\int_0^h \langle \theta_\delta(\nabla u_m)^2 \rangle dt  \leqslant c_1 \langle \theta_\delta f^2  \rangle \\ + c_2 \sqrt{\delta}\|f\|^2_\infty, \quad f \in C_\infty(\mathbb R^d),
\end{equation}
where constants $c_1, c_2<\infty$ don't depend on $m$.

\end{claim}

\begin{proof}[Proof of Claim \ref{dontrun2}]
For all $m$,
\begin{equation}
\label{L2_4}
C_0\int_0^h \langle \theta_\delta(\nabla u_m)^2\rangle dt  \leqslant \langle \theta_\delta f^2 \rangle \\ + C_1 k \sqrt{\delta} \biggl( \int_0^h \langle \theta_\delta u_m^2  \rangle dt + \int_0^h \langle \theta_\delta(\nabla u_m)^2  \rangle dt \biggr),
\end{equation}
where $0<C_0, C_1<\infty$ do not depend on $m$ or $\delta$. 
The proof is similar to the proof of Lemma \ref{lem1}\,(Step 1) but with $1-\rho_\delta$ replaced by $\theta_\delta$. By \eqref{L2_4},
\begin{equation*}
(C_0-C_1k\sqrt{\delta})\int_0^h \langle \theta_\delta(\nabla u_m)^2\rangle dt  \leqslant \langle \theta_\delta f^2  \rangle \\ + C_1k\sqrt{\delta} \int_0^h \langle \theta_\delta u_m^2 \rangle dt \quad \text{ for all }m.
\end{equation*}
We choose $\delta>0$ by the condition $C_0-C_1k\sqrt{\delta}>0$.
Recalling that $U_m$ is $L^\infty$-contractive and $\theta_\delta \in L^1$, we obtain $\int_0^h \langle \theta_\delta u_m^2  \rangle dt \leqslant C_3  \|f\|^2_\infty$.
This yields \eqref{L2_3}.
\end{proof}

We fix $h$ and $\delta$ from Claim \ref{dontrun2}. By \eqref{L2_3},
the sequence $\{\nabla u_m|_{[0,h] \times \bar{B}(0,R)}\}$ is weakly relatively compact in $L^2([0,h] \times \bar{B}(0,R),\mathbb R^d)$, where $\bar{B}(0,R)$ is the closed ball of radius $R>0$ arbitrarily fixed. Hence, $\nabla u|_{(0,h) \times B(0,R)}$ (understood in the sense of distributions) is in $L^2([0,h] \times \bar{B}(0,R),\mathbb R^d)$. It follows that $\nabla u \in L^{2}_{\loc}((0,\infty) \times \mathbb R^d, \mathbb R^d)$. 

(Note that if $f \in C_\infty(\mathbb R^d) \cap L^2(\mathbb R^d)$, then $\nabla u \in L^{2}_{\loc}((0,\infty) \times \mathbb R^d, \mathbb R^d)$ also follows from \eqref{est_Lp} with $p=2$.)
 
\smallskip 

It remains to show that $u$ satisfies the integral identity \eqref{int_id}. Clearly, 
\begin{equation}
\label{appl_id}
\int_0^\infty \langle u_{m}, \partial_t \psi \rangle dt - \int_0^\infty \langle u_{m},\Delta \psi \rangle dt + \int_0^\infty \langle (b_m-b)\cdot \nabla u_{m} ,\psi \rangle dt + \int_0^\infty \langle b\cdot \nabla u_{m} ,\psi \rangle dt=0.
\end{equation}
Without loss of generality, we consider only the test functions $\psi$ with $\supp \,\psi \subset (0,h) \times B(0,R)$,  for some $R>0$. 
Since $u_{m} \rightarrow u$ in $C([0,h],C_\infty(\mathbb R^d))$ by \eqref{limU}, we can pass to the limit $m \rightarrow \infty$ in the first two terms in the left-hand side of \eqref{appl_id}. By the H\"{o}lder inequality,
\begin{equation*}
\biggl|\int_{0}^{\infty} \langle (b_m-b)\cdot \nabla u_m ,\psi \rangle dt  \biggr|
 \leqslant S^{\frac{1}{2}} \left(\int_{0}^{\infty} \langle (b_m-b)^2|\psi| \rangle dt \right)^{\frac{1}{2}},
\end{equation*}
where $S:=\sup_m \int_{s}^{T} \langle |\nabla u_m|^2|\psi| \rangle dt<\infty$ by \eqref{L2_3}.
Therefore, since $b_m \rightarrow b$ in $L^2_{\loc}([0,\infty) \times \mathbb R^d, \mathbb R^d)$ and $\supp\psi$ is compact, the third term the left-hand side of  \eqref{appl_id} tends to $0$ as $m \rightarrow \infty$.
Finally, we can pass to the limit $m \rightarrow \infty$ in the fourth term in  \eqref{appl_id} because $\{\nabla u_m|_{[0,h] \times \bar{B}(0,R)}\}$ is weakly relatively compact in $L^2([0,h] \times \bar{B}(0,R))$, see \eqref{L2_3}, and $|b\psi| \in L^2([0,h] \times \bar{B}(0,R))$.

\appendix

\section{}

\begin{proof}[Proof of \eqref{est_Lp}]
We omit index $m$: $u=u_m$.
Without loss of generality, we may assume that $\tau \leqslant h$ for a small $h$, and that $f \geqslant 0$, so $u \geqslant 0$. Multiply the equation \eqref{cauchy} by $u^{p-1}$ and integrate to get
\begin{equation*}
R:=\int_0^\tau \langle u^{p-1}, \partial_t u \rangle dt  = \int_0^\tau \langle u^{p-1},\Delta u\rangle dt - \int_0^\tau \langle u^{p-1},b_m \cdot \nabla u\rangle dt =:R_1+R_2.
\end{equation*}
We have
$$
R=\frac{1}{p}\langle u^p(\tau)\rangle - \frac{1}{p}\langle f^p\rangle, \quad R_1=-(p-1)\frac{4}{p^2} \int_0^\tau \langle (\nabla u^{\frac{p}{2}})^2 \rangle dt.
$$
Using the inequality $ac \leqslant \nu a^2 + \frac{1}{4\nu} c^2$ ($\nu>0$) and the condition ($\mathbf{BC}_m$), we obtain:
\begin{equation*}
R_2 = -\frac{2}{p} \int_0^\tau \langle u^{\frac{p}{2}}, b_m \cdot \nabla u^{\frac{p}{2}}\rangle dt
\leqslant \frac{2}{p}\nu \int_0^\tau \langle (\nabla u^{\frac{p}{2}})^2 \rangle dt +  \frac{1}{2p\nu}\biggl(\beta \int_0^\tau \langle (\nabla u^{\frac{p}{2}})^2 \rangle dt +  \int_0^\tau \langle g(t)u^p\rangle dt \biggr).
\end{equation*}
Therefore,
\begin{equation*}
\frac{1}{p}\langle u^p(\tau)\rangle + \biggl( \frac{4(p-1)}{p^2}- \frac{2}{p}\nu -  \frac{\beta}{2p\nu} \biggr) \int_0^\tau \langle (\nabla u^{\frac{p}{2}})^2 \rangle dt \leqslant \frac{1}{p}\langle f^p \rangle + \frac{\beta}{2p\nu}\int_0^\tau g(t) \langle u^p\rangle dt
\end{equation*}
The maximum of $\nu \mapsto \frac{4(p-1)}{p^2}- \frac{2}{p}\nu -  \frac{\beta}{2p\nu} $, attained at $\sqrt{\beta/4}$, is positive if and only if $p>(1-\sqrt{\beta/4})^{-1}$. Set $\nu:=\sqrt{\beta/4}$.
Estimating $\int_0^\tau g(t) \langle u^p\rangle dt \leqslant G(h) \sup_{t \in [0,\tau]}\langle u^p(t)\rangle$, and selecting $h$ sufficiently small, so that $1-\frac{\beta}{2\nu}G(h)>0$ (recall that $G(h)=o(h)$),
we obtain
\begin{equation*}
\frac{1}{p}\left(1-\frac{\beta}{2\nu}G(h) \right)\sup_{t \in [0,\tau]}\langle u^p(t)\rangle + \biggl( \frac{4(p-1)}{p^2}- \frac{2}{p}\nu -  \frac{\beta}{2p\nu} \biggr) \int_0^\tau \langle (\nabla u^{\frac{p}{2}})^2 \rangle dt \leqslant \frac{1}{p}\langle f^p \rangle.
\end{equation*}
which yields \eqref{est_Lp}.
\end{proof}

\end{document}